\numberwithin{equation}{section}
\theoremstyle{plain}
\newtheorem{Th}{Theorem}[section]
\newtheorem{Lemma}[Th]{Lemma}
\theoremstyle{definition}
\newtheorem{Def}[Th]{Definition}
\newtheorem{Rem}[Th]{Remark}
\newtheorem{?}[Th]{Problem}
\newcommand*\R{\mathbb{R}}
\newcommand*\C{\mathbb{C}}
\newcommand*\Om{\Omega}
\newcommand{\Div}{\text{div}_x}
\newcommand{\vectoru}{\mathbf{u}}
\newcommand{\vectorv}{\mathbf{v}}
\newcommand{\Epsilon}{\mathcal{E}}
\newcommand{\dx}{\;\!\text{d}x}
\newcommand{\dt}{\;\text{d}t\,}
\newcommand{\vectorphi}{\pmb{\varphi}}
\newcommand{\Nu}{\mathcal{V}}
\newcommand{\Nutx}{\mathcal{V}_{t,x}}
\newcommand{\DD}{\mathbb{D}}
\newcommand{\Ov}[1]{\overline{#1}}
\newcommand{\Un}[1]{\underline{#1}}
\newcommand{\vc}[1]{{\bf #1}}
\newcommand{\bb}[1]{{\mathbb #1}}
\newcommand{\vr}{\varrho}
\newcommand{\tvr}{\tilde{\varrho}}
\newcommand{\vt}{\vartheta}
\newcommand{\tvt}{\tilde{\vartheta}}
\newcommand{\tT}{\widetilde{\Theta}}
\newcommand{\tu}{\tilde{\vc{u}}}
\newcommand{\dvt}{\vc{D}_\vt}
\newcommand{\vrthu}{\vr,\vt,\vc{u}}
\newcommand{\tildevrthu}{\tvr,\tvt,\tu}
\newcommand{\relenttuple}{\vrthu\,\vert\, \tildevrthu}
\newcommand{\lel}{\left\langle}
\newcommand{\ril}{\right \rangle}
\newcommand{\levert}{\left\vert}
\newcommand{\rivert}{\right \vert}
\newcommand{\inttom}{ \int_0^ \tau \int_{ \Om} }
\newcommand{\du}{\bb{D}_{\vc{u}}}
\newcommand{\vu}{\vc{u}}
\newcommand{\vm}{\vc{m}}
\newcommand{\D}{\text{d}}
\newcommand{\intO}[1]{\int_{\Omega} #1  \dx}
\newcommand{\br}{\nonumber \\}
\begin{document}
	
	\title{On weak(measure valued)-strong uniqueness for Navier--Stokes--Fourier system with Dirichlet boundary condition}

	\author{Nilasis Chaudhuri
			\thanks{E-mail:\tt n.chaudhuri@imperial.ac.uk}
	}
	\maketitle
	\vspace{-5mm}
	
	\centerline{Imperial College London}
	\centerline{Department of Mathematics, 180 Queen's Gate, Huxley Building, }
	\centerline{ SW7 2AZ, London, United Kingdom}

	\begin{abstract}
	 In this paper our goal is to define a measure valued solution of compressible Navier--Stokes--Fourier system for a heat conducting fluid with Dirichlet boundary condition for temperature in a bounded domain. The definition is based on the weak formulation of entropy inequality and ballistic energy inequality. Moreover, we obtain the \textit{weak(measure valued)-strong uniqueness} property of this solution with the help of relative energy. 
	\end{abstract}

	{\bf Keywords:} Compressible Navier-Stokes-Fourier system, Dirichlet boundary conditions, measure valued solutions, weak-strong uniqueness. \\
	
	{\bf AMS classification:}Primary: 35Q30; Secondary: 35B30, 76N10;
	
	\section{Introduction}
	We consider the Navier--Stokes--Fourier system in the time-space cylinder $ Q=(0,T) \times \Om $, where $ T>0  $ and $ \Om $ is a bounded domain of $ \R^d $ with $ d=2 $ or $ 3 $. The time evolution of the \emph{density} $ \varrho =\varrho(t,x) $, \emph{velocity} $ \vc{u}=\vc{u}(t,x) $ and the \emph{absolute temperature} $ \vartheta=\vartheta(t,x) $ of a compressible, viscous and heat conducting fluid is given by the following system of field equations which describes conservation of mass, momentum and internal energy:
	
	\begin{align}
			&\partial_t \varrho + \Div (\varrho \vc{u}) =0,\label{NSF-c}\\
			&\partial_t (\varrho \vc{u}) + \Div (\varrho \vc{u} \otimes \vc{u}) + \nabla_x p(\varrho,\vartheta) = \Div \bb{S} + \varrho \vc{g},\label{NSF-m}\\
			&\partial_t (\varrho e(\varrho,\vartheta)) + \Div (\varrho e(\varrho,\vartheta)\vu ) + \Div \vc{q} = \bb{S}\colon \bb{D}_x \vc{u} - p(\varrho , \vartheta) \Div \vc{u} ,\label{NSF-e}
	\end{align}
	where $\bb{D}_x \vc{u}=\frac{1}{2}( \nabla_x \vc{u} +  \nabla_x^t \vc{u})  $, $ p= p(\varrho,\vartheta) $ is the pressure and $ e=e(\varrho, \vartheta) $ is the internal energy and $ \vc{g} $ is an external force. 
	The pressure and the internal energy is interrelated by means of \emph{Gibbs' equation}
	\begin{align}\label{GE}
		\vartheta Ds= De+pD\left(\frac{1}{\varrho}\right),
	\end{align}
	where $ s=s(\varrho,\vartheta)  $ is the \emph{entropy} and $D= \begin{pmatrix}
		\frac{\partial}{\partial \vr} \\
		\frac{\partial}{\partial \vt}\\
	\end{pmatrix}.$
	We assume the fluid is Newtonian, hence the \emph{viscous stress tensor}$ (\bb{S}) $ is given by 
	\begin{align}\label{visc_str}
		\mathbb{S}(\nabla_x \mathbf{u})=\mu(\varrho , \vartheta) \bigg(\frac{\nabla_x \vectoru + \nabla_x^{T} \vectoru}{2}-\frac{1}{d} (\Div\vectoru)\mathbb{I} \bigg) + \lambda(\varrho , \vartheta) (\Div \vectoru) \mathbb{I},
	\end{align}
	where $ \mu $ is the \emph{shear viscosity} coefficient and $ \lambda $ is the \emph{bulk viscosity} coefficient with $ \mu>0 $ and $ \lambda\geq 0 $. The \emph{heat flux} $ \vc{q} $ is given by the \emph{Fourier law}, 
	\begin{align}\label{Fou_law}
		\vc{q}(\vr,\vartheta, \nabla_x \vartheta) = - \kappa(\vr,\vartheta) \nabla_x \vartheta,
	\end{align}
	where $ \kappa $ is the \emph{heat conductivity} coefficient.
	Since we consider the bounded domain $ \Om $, our goal is to discuss the solvability of the initial-boundary value problem for the system \eqref{NSF-c}-\eqref{NSF-e} endowed with the constitutive relations \eqref{visc_str} and \eqref{Fou_law} where we consider \emph{homogeneous Dirichlet boundary condition} in velocity and \emph{in-homogeneous Dirichlet boundary condition} in the temperature
	\begin{align}
		&\vc{u}|_{\partial \Om} = 0, \label{velocity_bdry}\\
		&\vartheta|_{\partial \Om} = \vartheta_B>0. \label{temp_bdry}
	\end{align}
 \par 
  The consideration of \eqref{temp_bdry} is closely related to the celebrated \emph{Rayleigh-Benard problem}.
  For the above system \eqref{NSF-c}-\eqref{NSF-e} with the consitutive relations \eqref{GE}-\eqref{temp_bdry} and the pressure following Boyle's law, the existence of a\emph{ local in time} strong solution was proved by Valli and Zajaczkowski \cite{VZ1986}. For the global in time weak solutions there are many articles and monographs by P.L. lions,\cite{PLL1996} Bresch and Desardin\cite{BD2007}, Feireisl and Novotn\'y \cite{FN2009b}, Bresch and Jabin \cite{BJ2018} which focus mainly on space-periodic domains or energy-conserving boundary conditions for the temperature. Also for thermodynamically open systems, Feireisl and Novotn\'y \cite{FN2021} prove the existence of weak solutions, but also require the control of the internal (heat) energy flux $ \vc{q} $ in $ \partial \Om $. Recently, the concept of weak solution was introduced for the particular boundary condition \eqref{temp_bdry} in \cite{CF2021} and it also considers very general in flow-outflow boundary data. This weak solution approach is slightly different from the earlier considerations that we will discuss in Section 2. This similar idea is adapted by Pokorn\' y in \cite{P2022} for steady flows.\par
	The concept of measure valued solutions in the context of inviscid and incompressible fluids has been used by several authors, beginning with the seminal work of DiPerna \cite{D1985} and DiPerna and Majda \cite{DM1987}, see also related results of Kr\"oner and Zajaczkowski \cite{KJ1996}, Necasova and Novotn\'y\cite{NN1994} and Neustupa \cite{Nes1993}. B\v rezina, Feireisl and Novotn\'y\cite{BFN2020} also introduce the measure valued solution for compressible viscous heat conducting fluids for the "no flux" boundary condition. Their definition follows the approach of considering measure valued solution as a general object as described by Brenier, De Lellis and Szekelyhidi \cite{BDS11}, in contrast to the approach described by Malek et. al. \cite{MNRR1996}, where they identify a measure valued solution as a weak limit of weak solutions. We follow here the approach proposed by \cite{BFN2020}.\par

For a generalized (weak or measure valued) solution, it is quite important to prove the \emph{generalized-strong uniqueness principle}. The principle asserts that given the same initial data a weak solution will coincide with the strong or classical solution if the latter exists. For such properties in the context of weak solutions, see Feireisl and Novotn\'y\cite{FN2012} and  Feireisl \cite{F2012}. For measure valued solution for the system was avalaible in B\v rezina et al\cite{BFN2020}. The main idea here is to use the \emph{relative energy inequality}. It was first introduced by Daefermos \cite{D1979} for scalar conservation law and the for compressible fluid a suitable adaptation is available in Feireisl and Novotn\'y \cite{FN2009b}. \par

An important application of measure valued solutions is their identification as limits of numerical schemes. Interesting results in numerical analysis have been obtained by Fjordholm et al.  \cite{FjMT2012}, Feireisl and  Luk\'a\v cov\'a-Medvid'ov\'a \cite{FL2018} and Feireisl, Luk\'a\v cov\'a-Medvid'ov\'a and Mizerov\'a\cite{FLM2020i}. Together with the existing weak(measure valued)-strong uniqueness principle in the class of measure valued solutions, one can show that numerical solutions converge strongly to a strong solution of the system as long as the latter exists. In the context of compressible Navier-Stokes-Fourier system with no-flux boundary condition see Feireisl et al\cite{FLMS2021} and a detailed discussion is available in the monograph by Feireisl et al. \cite{FLMS2021book}. \par
 The plan for the article is as follows:
	\begin{itemize}
		\item At first in Section \ref{s2}, we devote ourselves to the weak formulation of the problem along with a proper definition of the measure valued solution.
		\item In Section \ref{s3}, adapting the relative energy inequality suitably for the measure valued solution, we derive the relative energy inequality for the measure valued solution.
		\item We present our main results in Section \ref{s4}. We conclude the weak(measure valued)-strong uniqueness property for the system. Our first two theorems, Theorem \ref{thc1} and Theorem \ref{thc2}, requires some additional hypothesis on measure valued solutions, while Theorem \ref{thu} is does not need any of such extra assumption on solutions, although we impose some physically relevant structural assumptions on transport coefficients $ \mu, \lambda $ and $ \kappa $.
		\item Finally, in the Section \ref{s5}, we discuss briefly the limitation of our results, some comments on existence and validation of the definition.
	\end{itemize}

	\section{Measure valued solution}\label{s2}
	\subsection{Weak formulation: Revisit}
		At first we quickly recall the weak formulation described in Feireisl and Novotn\'y \cite[Chapter 2]{FN2009b}. Although it deals with a no-flux boundary condition for temperature, i.e. 
		\begin{align}\label{temp-bdry-2}
			\vc{q} \cdot \vc{n}=0 \text{ on } \partial \Om.
		\end{align}
	The weak formulation of the problem depends on the weak formulation of continuity equation, momentum equation and replaces the \emph{internal energy equation} \eqref{NSF-e} by the \emph{entropy inequality} 
	\begin{align}\label{ent_ineq}
		\begin{split}
			\partial_t (\vr s(\vr, \vt)) + \Div (\vr s(\vr, \vt) \vu) + \Div \left( \frac{ \vc{q}(\vr,\vt, \nabla_x \vt ) }{\vt} \right)\br \geq \frac{1}{\vt} 
		\left( \mathbb{S} (\vr,\vt, \bb{D}_x \vu ) : \bb{D}_x \vu - \frac{\vc{q}(\vr,\vt, \nabla_x \vt) \cdot \nabla_x \vt }{\vt} \right),
	\end{split}
	\end{align}
	along with the weak form of \emph{total energy balance} : 
		\begin{align}  
		\frac{\D }{\dt} &\intO{ \left[ \frac{1}{2} \vr |\vu |^2 + \vr e \right] } 		=		\intO{ \vr \vu \cdot \vc{g}  } .
	\end{align}
	Instead of boundary condition \eqref{temp-bdry-2}, if we consider inhomogeneous Dirichlet boundary condition for the temperature \eqref{temp_bdry}, we need to proceed as prescribed \cite[Section 2.4]{CF2021}. In this case, assuming all quantities in consideration are smooth, we have 
	\begin{align}
		\partial_t \left[  \frac{1}{2} \vr |\vu |^2 + \vr e  \right] + \Div\left( \left[  \frac{1}{2} \vr |\vu |^2 + \vr e +p(\vr,\vt)\right] \vu  \right) + \Div \vc{q}=\Div\left(\bb{S}\vu\right)+\vr \vu \cdot \vc{g},
	\end{align}
and consequently
	\begin{align}  
		\frac{\D }{\dt} &\intO{ \left[ \frac{1}{2} \vr |\vu |^2 + \vr e \right] } + 
		\int_{\partial \Omega} \vc{q} \cdot \vc{n} \ \D \sigma_x  
		=		\intO{ \vr \vu \cdot \vc{g}  } 
		\label{w1}
	\end{align}
where $ \sigma_x $ is the surface measure on the boundary $ \partial \Om $. Therefore, the total energy balance is unavailable. But, considering a smooth function $ \tilde{\vt} $ such that \[ \tvt >0 \text{ in } (0,T) \times \Om \text{ with } \tvt = \vt_B \text{ on }\partial \Om,\]
and multiplying the entropy inequality \eqref{ent_ineq} by $\tvt$ along with performing the integrating by parts formula, we obtain 
\begin{align} 
	- \frac{\D }{\dt} &\intO{ \tvt \vr s } - \int_{\partial \Omega} \vc{q} \cdot \vc{n}\, \D \sigma_x 
	 \nonumber\\ &\leq - \intO{ \frac{\tvt}{\vt}	 \left( \mathbb{S} : \bb{D}_x \vu - \frac{\vc{q} \cdot \nabla \vt }{\vt} \right) }
	- \intO{ \left[ \vr s \left( \partial_t \tvt + \vu \cdot \nabla \tvt \right) + \frac{\vc{q}}{\vt} \cdot \nabla \tvt \right] }.
	\label{w2}
\end{align}
Adding \eqref{w1} and \eqref{w2} we obtain the \emph{ballistic energy inequality}
	\begin{align*}  
	\frac{\D }{\dt} &\intO{ \left[ \frac{1}{2} \vr |\vu |^2 + \vr e - \tvt \vr s \right] } + \intO{ \frac{\tvt}{\vt}	 \left( \mathbb{S} : \bb{D}_x \vu - \frac{\vc{q} \cdot \nabla \vt }{\vt} \right) }\br
	&\leq 	\intO{ \vr \vu \cdot \vc{g}  }-  \intO{ \left[ \vr s \left( \partial_t \tvt + \vu \cdot \nabla \tvt \right) + \frac{\vc{q}}{\vt} \cdot \nabla \tvt \right] }.
\end{align*}
We recall that for some smooth function $ \tT $, the ballistic energy is denoted by $H_{\tT}(\vr,\vt)   $  and it reads as \[ H_{\tT}(\vr,\vt) = \vr [e(\vr,\vt)- \tT s(\vr,\vt)] .\] 
Therefore we consider the weak formulation with ballistic energy inequality instead of energy balance. 
In this paper our goal is to provide a suitable definition of a measure valued solution based on the above discussion for boundary condition \eqref{temp_bdry}. To define it, we take motivation from B\v rezina, Feireisl and Novotn\' y \cite{BFN2020} that covers the the boundary condition \eqref{temp-bdry-2}. This definition will give in terms of Young measure and suitable defect measures. For a simpler consideration, from now on we consider $ \vc{g}=0 $.
	\subsubsection{Phase space and Young measure} A natural candidate for the phase space is given by the state variables $ [\varrho,\vc{u},\vartheta] $. Since we are looking for a more general class of solutions and $ \nabla_x \vc{u} $  and $ \nabla_x \vartheta $ are present in the system \eqref{NSF-c}-\eqref{NSF-e}, thus gradient of velocity and temperature have been included along with the natural choices. Hence a proper phase space is 
	\begin{align}
		\mathcal{F}=\{[\varrho,\vc{u},\vartheta,\DD_{\vectoru}, \vc{D}_{\vartheta}] \big | \varrho\in [0,\infty),\; \vectoru\in \R^d,\; \vartheta \in [0,\infty),\; \DD_{\vectoru}\in \R^{d\times d}_{\text{sym}},\; \vc{D}_\vartheta \in \R^{d} \}.
	\end{align}

	We consider a Young measure $ \Nu $ such that $\Nu \equiv \{ \Nu_{t,x} \}_{(t,x)\in (0,T)\times \Omega}$ and
	\begin{align}\label{mv-ym}
		\Nu \in L^{\infty}_{\text{weak-(*)}} \big( (0,T)\times \Om ;\mathcal{P}(\mathcal{F} )\big),
	\end{align}
	\subsubsection{Initial data}
	
	Let $ \Nu_0 \in L^{\infty}_{\text{weak-(*)}} \big( \Om ;\mathcal{P}(\mathcal{F}_0 )\big) $ such that 
	$ \mathcal{F}_0= \{ (\varrho_0, \vc{u}_0, \vt_0)| \vr_0 \in [0,\infty),\, \vc{u}_0\in \R^d,\, \vt_0\in [0,\infty)\} $ with finite energy, i.e., 
	
	\begin{align}\label{mv-ic}
		\int_{ \Om} \left\langle \Nu_{0,x}; \left( \frac{1}{2}\varrho \vert \mathbf{u} \vert^2 + \varrho e(\varrho,\vartheta) - \tT(0,x) \varrho s(\varrho,\vartheta)\right) \right\rangle \dx < \infty
	\end{align}
	for each $0< \tT  \in C^1([0,T] \times \Ov{\Om}) $ with $ \tT= \vt_B $ on $\partial \Om $.
	\subsubsection{Compatibility relations for the Young measure} 
	We consider a very general phase space for the Young measure. It satisfies the following compatibility conditions.
	\begin{itemize}
		\item \textbf{Velocity compatibility:} The identity
		\begin{align}\label{mv-vc}
			-\int_{0}^{T} \int_{ \Om}\lel \Nu_{t,x}; \vc{u} \ril \cdot \Div \bb{T} \dx \dt = \int_{0}^{T} \int_{ \Om} \lel \Nu_{t,x}; \bb{D}_{\vc{u}} \ril \dx \dt 
		\end{align}
		holds for any $ \bb{T} \in C^1(\Ov{Q_T}; \R^{d\times d}_{\text{sym}}) $. 
		\item \textbf{Temperature compatibility:} The identity
		\begin{align}\label{mv-tc}
			-\int_{0}^{T} \int_{ \Om} \lel  \Nu_{t,x}; \vt - \widetilde{\Theta} \ril  \Div \psi\dx \dt =  -\int_{0}^{T} \int_{ \Om} \lel \Nu_{t,x} ; \vc{D}_\vt - \nabla_x \widetilde{\Theta} \ril \psi \dx \dt 
		\end{align}
		holds for any $ \psi \in C^1([0,T]\times \Ov{\Om}) $ and $ \widetilde{\Theta} \in C^1([0,T]\times \Ov{\Om})  $ with $ \widetilde{\Theta}= \vt_B $ on $\partial \Om $.
	\end{itemize}
	\subsubsection{Field equations with defect}
	\begin{itemize}
		\item \textbf{Equation of continuity:} For a.e. $\tau \in (0,T) $ and $\psi \in C^{1}([0,T]\times \bar{\Om})$, the following equation holds:
		\begin{align} \label{mv-cont-eqn}
			\begin{split}
				&\int_{ \Om} \langle \Nu_{\tau,x} ; \varrho \rangle \psi(\tau, \cdot) \dx - \int_{ \Om} \langle \Nu_{0,x} ; \varrho \rangle \psi(0, \cdot) \dx \\
				&\quad = \int_{0}^{\tau} \int_{ \Om} \big[ \langle \Nu_{t,x}; \varrho  \rangle \partial_{t}\psi + \langle \Nu_{t,x}; \varrho \vc{u} \rangle \cdot \nabla_x \psi \big] \dx \dt.
			\end{split}
		\end{align}
		\item \textbf{Momentum equation:} There exists a \textit{measure} $r^{M} \in L^\infty_{\text{weak-(*)}}(0,T;\mathcal{M}(\bar{\Om};\R^{d\times d}))$ such that for a.e. $\tau \in (0,T)$ and every $\vectorphi\in C^{1}([0,T]\times \bar{\Om};\R^d)$, $\vectorphi|_{\partial \Om} =0$ the following equation holds:
		\begin{align}\label{mv-mom-eqn}
			\begin{split}
				&\int_{ \Om} \langle \Nu_{\tau,x}; \varrho \vc{u} \rangle \cdot \vectorphi(\tau,\cdot) \dx - \int_{ \Om} \langle \Nu_{0,x} ;\varrho \vc{u} \rangle \cdot \vectorphi(0,\cdot) \dx\\
				&= \int_{0}^{\tau} \int_{ \Om} \big[ \langle \Nu_{t,x}; \varrho \vc{u} \rangle \cdot \partial_{t} \vectorphi +  \langle \Nu_{t,x}; \varrho \vc{u} \otimes \mathbf{u}) \rangle : \nabla_x \vectorphi + \langle \Nu_{t,x}; p(\varrho,\vartheta) \rangle \Div \vectorphi  \big] \dx \dt\\
				& - \int_{0}^{\tau} \int_{ \Om} \langle \Nu_{t,x};\mathbb{S}(\varrho,\vartheta,\DD_{\vectorv}) \rangle : \nabla_x \vectorphi \dx \dt + \int_{0}^{\tau} \langle r^M;\nabla_x \vectorphi \rangle_{\{\mathcal{M}(\bar{\Om};\R^{d \times d}),C(\bar{\Om};\R^{d \times d})\}} \dt.
			\end{split}
		\end{align}
		\item \textbf{Entropy inequality:} For a.e. $\tau \in (0,T) $ and $0\leq \phi \in C_c^{1}([0,T]\times {\Om})$, we have the following inequality:
		\begin{align}\label{mv-ent-ineq}
			\begin{split}
				&\int_{ \Om} \langle \Nu_{\tau,x} ; \varrho s(\varrho,\vartheta) \rangle \phi(\tau,x) \dx -	\int_{ \Om} \langle \Nu_{0,x} ; \varrho s(\varrho,\vartheta)  \rangle \phi(0,x) \dx \\
				&\geq \int_0^\tau \int_{ \Om} \langle \Nu_{t,x} ; \varrho s(\varrho,\vartheta)  \rangle \partial_t \phi(t,x) \dx \dt \\
				& + \int_0^\tau \int_{ \Om} \left\langle \Nu_{t,x} ; \varrho s(\varrho,\vartheta)  \vc{u} - \frac{\kappa(\vr,\vartheta)}{\vartheta} \vc{D}_\vartheta \right\rangle \cdot \nabla_x \phi(t,x) \dx \dt \\
				&+ \int_0^\tau \int_{ \Om} \left\langle \Nu_{t,x} ; \frac{1}{\vartheta} \left( \bb{S}(\varrho,\vartheta,\nabla_{x}\vc{u}) \colon \bb{D}_x \vc{u} + \frac{\kappa(\vr,\vartheta)}{\vartheta} \vert \vc{D}_\vartheta \vert^2 \right) \right\rangle   \phi(t,x) \dx \dt 
			\end{split}
		\end{align}
		\item \textbf{Ballistic energy inequality:} For any $\tT \in C^1([0,T]\times \Ov{\Om}),\, \tT>0,\; \tT|_{\partial\Om}= \vartheta_B$, there exists a \emph{dissipation defect} $\mathcal{D}_{\tT}$ such that
		\begin{align*}
			\mathcal{D}_{\tT}\in L^{\infty}(0,T),\; \mathcal{D}_{\tT}\geq 0,
		\end{align*} and the following inequality holds:
		\begin{align}\label{Bal-eng-ineq}
			\begin{split}
				&\int_{ \Om} \left \langle\Nutx ; \left( \frac{1}{2}\varrho \vert \mathbf{u} \vert^2 + \varrho e(\varrho,\vartheta) - \tT \varrho s(\varrho,\vartheta)\right) \right \rangle \dx \\
				&+ \int_0^\tau \int_{ \Om} \left\langle \Nu_{t,x} ; \frac{1}{\vartheta} \left( \bb{S}(\varrho,\vartheta,\nabla_{x}\vc{u}) \colon \bb{D}_x \vc{u} + \frac{\kappa(\vr,\vartheta)}{\vartheta} \vert \vc{D}_\vartheta \vert^2 \right) \right\rangle   \tT(t,x) \dx \dt + \mathcal{D}_{\tT}(\tau) \\
				&\leq  \int_{ \Om} \left\langle \Nu_{0,x}; \left( \frac{1}{2}\varrho \vert \mathbf{u} \vert^2 + \varrho e(\varrho,\vartheta) - \tT \varrho s(\varrho,\vartheta)\right) \right\rangle \dx \\ 
				&- \int_0^\tau \int_{ \Om} \langle\Nu_{t,x} \varrho s(\varrho,\vartheta) \rangle \partial_{t} \tT\dx \dt -  \int_0^\tau \int_{ \Om} \langle\Nu_{t,x} \varrho s(\varrho,\vartheta) \vc{u} \rangle \cdot \nabla_x \tT\dx \dt  \\
				&+\int_0^\tau \int_{ \Om} \left\langle \Nu_{t,x}; \frac{\kappa(\vr,\vt) }{\vartheta}\mathbf{D}_\vartheta\right\rangle \cdot \nabla_x \tT \dx \dt.
			\end{split}
		\end{align}
	\end{itemize}
	\subsubsection{Compatibility of defect measures}
	For any $\tT \in C^1([0,T]\times \Ov{\Om}),\, \tT>0,\; \tT|_{\partial\Om}= \vartheta_B$, we have
	\begin{equation}\label{def-m-comp}
		\vert \langle r^M(\tau);\nabla_x \vectorphi \rangle_{\{\mathcal{M}(\bar{\Om};\R^{d \times d}),C(\bar{\Om};\R^{d\times d})\}} \vert \leq \xi(\tau) \mathcal{D}_{\tT}(\tau) \Vert \vectorphi \Vert_{C^1(\bar{\Om})}
	\end{equation} 
	where $ \xi \in L^1(0,T) $.
	\subsubsection{Generalized Korn--Poincar\'e inequality}
	The following version of Korn-Poincar\' e inequality is true: 
	\begin{align}\label{mv-KP}
		\inttom \lel \Nu_{t,x}; \vert \vc{u} - \widetilde{\vc{U}} \vert^2 \ril \dx \dt \leq C_p \inttom \lel \Nu_{t,x}; \vert \bb{D}_0(\bb{D}_\vc{u}) - \bb{D}(\nabla_x \widetilde{\vc{U}} ) \vert^2 \ril \dx \dt
	\end{align}
	for any $ \widetilde{\vc{U}}  \in L^2(0,T; W^{1,2}_0(\Om; \R^d)) $.
	\begin{Rem}
	We must note that the ballistic energy inequality is given for a large class of functions $ \tT $ and in general the dissipation defect $ \mathcal{D}_{\tT} $ is dependent of $ \tilde{\Theta} $. On the other hand the defect measure $ (r^M) $ in \eqref{mv-mom-eqn} is independent of $ \tT $. Therefore, the \eqref{def-m-comp} is very strong assumption. Although for some physical equation state we will able to conclude that $ \mathcal{D}_{\tT} $ is independent of $ \tT $, we will discuss it in Section \ref{s5}.
	\end{Rem}
	\begin{Rem}
		Analogously, on can think of to have a defect measure in the right hand side of inequality \eqref{Bal-eng-ineq}. We are avoiding it. An explanation is available in \ref{s5}.
	\end{Rem}
	\subsection{Definition of a measure valued solution}
   Here now we provide the definition of the measure valued solution 
   \begin{Def}\label{def:m}
   	Let $ \vt_B \in C^1((0,T)\times \partial \Om) $ with $ \vt_B >0 $ and $ \tT $ belongs to the class $ \{ \tT \in C^1([0,T]\times \Ov{\Om})\,| \tT>0,\; \tT|_{\partial\Om}= \vartheta_B\} $. Moreover, the initial condition $ \Nu_0 $ satisfies \eqref{mv-ic}. Then $ \{\Nu,\mathcal{D}_{\tT}\} $ is a measure valued solution for the system \eqref{NSF-c}-\eqref{NSF-e} with \eqref{GE}, \eqref{visc_str}, \eqref{Fou_law}, \eqref{velocity_bdry} and \eqref{temp_bdry} if it satisfies \eqref{mv-cont-eqn}-\eqref{Bal-eng-ineq} with the compatibility condition \eqref{mv-vc},\eqref{mv-tc}, \eqref{def-m-comp} and \eqref{mv-KP}. 
   \end{Def}

	\section{Relative energy inequality }\label{s3}
	
Following Feireisl and Novotn\' y in \cite[Chapter 9]{FN2009b}, we consider the relative energy with the help of Ballistic energy and it is given by  
	\begin{align*}
		E(\vrthu\,\vert\, \tildevrthu) = \frac{1}{2} \vr \vert \vc{u} -\tu \vert^2 + \left( H_{\tvt} (\vr,\vt) - \frac{\partial H_{\tvt} (\tvr,\tvt) }{\partial \vr } (\vr -\tvr) - H_{\tvt} (\tvr,\tvt) \right),
	\end{align*}
	where $ (\tildevrthu) $ are smooth functions such that they satisfy
	\begin{align}
		\tvr>0 , \tu \text{ with }\tu\vert_{\partial \Om}=0 \text{ and } \tvt>0 \text{ with } \tvt\vert_{\Om} = \vt_B .
	\end{align}
As observed in \cite[Section 1.2]{FN2021}, if the relative energy is interpreted in terms of the conservative entropy variables $(\vr, S = \vr s, \vm = \vr \vu)$, represents a Bregman distance/divergence associated with the energy functional 
\[
E(\vr, S, \vm) = \frac{1}{2} \frac{|\vm|^2}{\vr} + \vr e(\vr, S).
\]
To obtain that the reltive enrgy functional is non-negative, Indeed we need the \emph{hypothesis of thermodynamic stability}, i.e.
\begin{align}\label{ther-stab}
	\frac{\partial p(\vr, \vt)}{\partial \vr} > 0,\; \frac{\partial e(\vr, \vt)}{\partial \vt} > 0,
\end{align} 
which in turn yields the convexity of the internal energy $\vr e(\vr,S)$ with respect to the variables $(\vr,S)$. In addition,
\begin{equation} \label{ws2}
	\frac{\partial (\vr e(\vr, S))}{\partial \vr} = e - \vt s + \frac{p}{\vr},\ 
	\frac{\partial (\vr e(\vr, S))}{\partial S} = \vt.
\end{equation} 
Thus the relative energy expressed in the conservative entropy variable may be interpreted as 
\[
E \left( \vr, S, \vm \Big| \tvr , \tilde{S}, \tilde{\vm} \right) = E(\vr, S, \vm) - \left< \partial E(\tvr, \tilde{S}, \tilde{\vm}) ; (\vr - \tvr, S - \tilde{S}, \vm - \tilde{\vm}) \right> - E(\tvr, \tilde{S}, \tilde{\vm}).
\]
	The time evolution of the relative energy is given by \[\Epsilon(\relenttuple) = \int_{ \Om} E(\relenttuple) \dx .\] 
	Given a measure valued solution $ \{\Nu_{t,x}\}_{(t,x)\in Q_T} $ of Navier-Stokes-Fourier system, we adapt the relative energy as 
	\begin{align}\label{rel1}
		\Epsilon_{mv}(\tau):= \int_{ \Om} \lel \Nu_{\tau,x} ; E(\relenttuple) \ril \dx .
	\end{align}
	At first, using the standard expansion, we have 
	\begin{align}\label{rel2}
		\begin{split}
			\Epsilon_{mv}(\tau)= &\int_{ \Om} \lel \Nu_{\tau,x} ; \frac{1}{2} \vr \vert \vc{u} \vert^2 + H_{\tvt}(\vr,\vt)  \ril \dx- \int_{ \Om} \lel \Nu_{\tau,x} ; \varrho \vc{u} \ril  \cdot \tu \dx  \\
			& + \int_{ \Om} \lel \Nu_{\tau,x} ; \varrho \ril \left( \frac{1}{2} \vert \tu \vert^2 - \frac{\partial H_{\tvt} (\tvr,\tvt) }{\partial \vr} \right) \dx + \int_{ \Om} p(\tvr,\tvt)(\tau,\cdot) \dx = \Sigma_{i=1}^{4} \mathcal{L}_i.
		\end{split}
	\end{align}
	In the identity \eqref{rel2}, the term $ \mathcal{L}_1 $ is associated with the ballistic energy inequality \eqref{Bal-eng-ineq}, while for the terms $ \mathcal{L}_2 $ and $ \mathcal{L}_3 $ we use the momentum equation \eqref{mv-mom-eqn} and the continuity equation \eqref{mv-cont-eqn}, respectively. For the term $ \mathcal{L}_4 $ we use the identity 
	\[ \int_0^ \tau \int_{ \Om} \partial_{t} p(\tvr,\tvt) \dx \dt = \left[  \int_{ \Om} p(\tvr,\tvt)(t,\cdot) \dx\right]_{t=0}^{t=\tau} .\] 
	A suitable application of the Gibbs' relation \eqref{GE} yields
	\begin{align}\label{rel3}
		\begin{split}
			\Epsilon_{mv}(\tau) & +  \int_0^ \tau \int_{ \Om} \left[ \lel \Nu_{t,x} ; \frac{\tvt(t,x)}{\vt}\bb{S}(\vr,\vt, \bb{D}_{\vc{u}}) \colon \bb{D}_{\vc{u}} \ril + \tvt(t,x) \lel \Nu_{t,x} ; \frac{\kappa(\vr,\vt)}{\vt^2} \vert \vc{D}_\vt \vert^2\ril \right] \!\!\dx \dt\\
			& - \inttom \lel \Nu_{t,x} \colon \bb{S}(\vr,\vt, \bb{D}_{\vc{u}}) \ril \colon \nabla_x \tu(t,x) \dx \dt + \mathcal{D}(\tau)\\
			\leq &\Epsilon_{mv}(0)-\inttom \bigg[ \lel \Nu_{t,x} ; \vr s(\vr,\vt) \ril \partial_{t} \tvt (t,x) \\ 
			&      \hspace{ 22mm } +\lel \Nutx ; \left( \vr s(\vr,\vt)\vc{u} - \frac{\kappa(\vr,\vt)}{\vt} \vc{D}_\vt \right) \ril \cdot \nabla_x \tvt (t,x) \bigg] \dx \dt	\\
			& + \inttom \big[ \lel \Nutx ; \vr ( \tu(t,x)-\vc{u}) \ril \cdot \partial_{t} \tu(t,x) \\ 
			&      \hspace{ 22mm } + \lel \Nu_{t,x} ; \vr( \tu(t,x)-\vc{u}) \otimes \vc{u} \ril \colon \nabla_x \tu(t,x)   \big] \dx \dt\\
			& -\inttom \lel \Nu_{t,x} ; p(\varrho,\vt ) \ril \Div \tu(t,x) \dx \dt \\
			& + \inttom \left[ \lel \Nu_{t,x} ; \vr \ril \partial_{t} \tvt(t,x) s(\tvr,\tvt)(t,x)  + \lel \Nu_{t,x} ; \vr \vc{u} \ril \cdot  \nabla_x \tvt(t,x) \, s(\tvr,\tvt)(t,x) \right]\\
			& + \inttom \left[ \lel \Nu_{t,x} ; \tvr(t,x)- \vr \ril \frac{1}{\tvr} \partial_{t} p(\tvr,\tvt)(t,x)  - \lel \Nu_{t,x} ; \vr \vc{u} \ril \cdot \frac{1}{\tvr}  \nabla_x \partial_{t} p(\tvr,\tvt)(t,x)  \right]\\
			& +  \int_{0}^{\tau} \langle r^M;\nabla_x \tu(t,x) \rangle_{\{\mathcal{M}(\bar{\Om};\R^{d \times d}),C(\bar{\Om};\R^{d \times d})\}} \dt.
		\end{split}
	\end{align}
	Our main goal is to establish the weak (measure valued)-- strong uniqueness property. To obtain this result we choose $ (\tvr,\tu,\tvt) $ as a strong solution of the problem emanating from the same initial data $ \Nu_{0,x} $ and they share same boundary condition. Thus, suppose that $ (\tvr,\tu,\tvt) $ is smooth and $ \vr, \vt >0 $ in $ (0,T)\times \Om $, then the inequality\eqref{rel3} reduces to 
	\begin{align*}
			\Epsilon_{mv}(\tau) & + \int_0^ \tau \int_{ \Om} \left[ \lel \Nu_{t,x} ; \frac{\tvt(t,x)}{\vt}\bb{S}(\vr,\vt, \bb{D}_{\vc{u}}) \colon \bb{D}_{\vc{u}} \ril \right] \dx\dt\\
			& + \inttom \tvt(t,x) \lel \Nu_{t,x} ; \frac{\kappa(\vr,\vt)}{\vt}  \vc{D}_\vt \left( \frac{\vc{D}_\vt}{\vt} - \frac{\nabla_x \tvt}{\tvt}\right)\ril  \!\!\dx \dt\\
			& - \inttom \lel \Nu_{t,x} \colon \bb{S}(\vr,\vt, \bb{D}_{\vc{u}}) \ril \colon \nabla_x \tu(t,x) \dx \dt \\
			& - \inttom \lel \Nu_{t,x}; (\bb{D}_\vc{u}- \bb{D}(\tu)) \ril \colon \bb{S}(\tvr,\tvt,\nabla_x \tu) \dx \dt + \mathcal{D}(\tau)\\
			\leq &\Epsilon_{mv}(0) - \inttom \tvr \lel \Nutx ; s(\vr,\vt)-s(\tvr,\tvt) \ril \left( \partial_{t} \tvt + \tu \cdot \nabla_x \tvt \right) \dx \dt \\
			& - \inttom \lel \Nu_{t,x} ; \vc{u}- \tu \ril \cdot \nabla_x p(\tvr,\tvt) \dx \dt -\inttom \lel \Nu_{t,x} ; p(\varrho,\vt ) \ril \Div \tu(t,x) \dx \dt \\ 
			& + \inttom \left[ \lel \Nu_{t,x} ; \tvr(t,x)- \vr \ril \frac{1}{\tvr} \partial_{t} p(\tvr,\tvt)(t,x)  - \lel \Nu_{t,x} ; \vr \vc{u} \ril \cdot \frac{1}{\tvr}  \nabla_x  p(\tvr,\tvt)(t,x)  \right]\\
			& +  \int_{0}^{\tau} \langle r^M;\nabla_x \tu(t,x) \rangle_{\{\mathcal{M}(\bar{\Om};\R^{d \times d}),C(\bar{\Om};\R^{d \times d})\}} \dt + \inttom R_1 \dx \dt,
	\end{align*} 
	where
	\begin{align*}
			R_1= &\lel \Nu_{t,x}; \vr (\tu(t,x) - \vc{u}) \otimes (\vc{u}- \tu(t,x)) \ril \colon \bb{D}_x \tu(t,x) \\
			&+ \lel \Nu_{t,x}; \left( \frac{\vr}{\tvr(t,x)}-1\right)(\tu(t,x) -\vc{u}) \ril \cdot \Div \bb{S}(\tvt, \bb{D}_x \tu)(t,x)\\
			& +\lel \Nu_{t,x}; \left( \frac{\vr}{\tvr(t,x)}-1\right)(\tu(t,x) -\vc{u})  \ril  \cdot \nabla_x p(\tvr,\tvt)(t,x) \\
			& - \lel \Nu_{t,x}; \vr (s(\vr,\vt)- s(\tvr,\tvt)) (\tu-\vc{u})\ril \cdot \nabla_x \tvt \\
			&- \lel \Nu_{t,x}; (\vr-\tvr)(s(\vr,\vt)-s(\tvr,\tvt)) \ril \left( \partial_{t} \tvt + \tu \cdot \nabla_x \tvt \right).
	\end{align*}
Moreover, adjusting a few terms suitably, the above inequality becomes
	\begin{align*}
			\Epsilon_{mv}(\tau) & + \int_0^ \tau \int_{ \Om} \left[ \lel \Nu_{t,x} ; \frac{\tvt(t,x)}{\vt}\bb{S}(\vr,\vt, \bb{D}_{\vc{u}}) \colon \bb{D}_{\vc{u}} \ril \right] \dx\dt\\
			& + \inttom \tvt(t,x) \lel \Nu_{t,x} ; \frac{\kappa(\vr,\vt)}{\vt}  \bb{D}_\vt \left( \frac{\vc{D}_\vt}{\vt} - \frac{\nabla_x \tvt}{\tvt}\right)\ril  \dx \dt\\
			& - \inttom \lel \Nu_{t,x} \colon \bb{S}(\vr,\vt, \bb{D}_{\vc{u}}) \ril \colon \nabla_x \tu(t,x) \dx \dt \\
			& - \inttom \lel \Nu_{t,x}; (\bb{D}_\vc{u}- \bb{D}(\nabla_x \tu)) \ril \colon \bb{S}(\tvr,\tvt,\nabla_x \tu) \dx \dt + \mathcal{D}_{\tvt}(\tau)\\
			\leq &\Epsilon_{mv}(0) + \inttom \lel \Nutx ; \tvt-\vt \ril \Div\left( \frac{\kappa(\tvr,\tvt) \nabla_x \tvt}{\tvt}\right)\dx \dt \\
			& +\inttom \lel \Nutx ; \left(1 - \frac{\vt}{\tvt}\right) \ril \left( \bb{S}(\tvr,\tvt,\bb{D}(\nabla_x \tu))\colon \bb{D}(\nabla_x) \tu + \frac{\kappa(\tvr,\tvt) \nabla_x \tvt}{\tvt}\right) \dx\dt \\
			& +  \int_{0}^{\tau} \langle r^M;\nabla_x \tu(t,x) \rangle_{\{\mathcal{M}(\bar{\Om};\R^{d \times d}),C(\bar{\Om};\R^{d \times d})\}} \dt + \inttom R_2 \dx \dt,
	\end{align*} 
	where 
	\begin{align*}
		R_2=&R_1+ \lel \Nu_{t,x}; \left( 1- \frac{\vr}{\tvr(t,x)}\right)(\tu(t,x) -\vc{u})  \ril  \cdot \nabla_x p(\tvr,\tvt)(t,x) \\
		&+ \lel \Nu_{t,x} ; p(\tvr,\tvt) - \frac{\partial p(\tvr,\tvt)}{\partial \vr} (\tvr-\vr)- \frac{\partial p(\tvr,\tvt)}{\partial \vt} (\tvt -\vt) - p(\vr,\vt) \ril \Div \tu \\
		& + \lel \Nu_{t,x} ; s(\tvr,\tvt) - \frac{\partial s(\tvr,\tvt)}{\partial \vr} (\tvr-\vr)- \frac{\partial s(\tvr,\tvt)}{\partial \vt} (\tvt -\vt) - s(\vr,\vt) \ril \tvr \left( \partial_{t} \tvt + \tu \cdot \nabla_x \tvt \right).
	\end{align*}
			More precisely, we have
			\begin{align}\label{R2}
				\begin{split}R_2=&\lel \Nu_{t,x}; \vr (\tu(t,x) - \vc{u}) \otimes (\vc{u}- \tu(t,x)) \ril \colon \bb{D}_x \tu(t,x) \\
			&+ \lel \Nu_{t,x}; \left( \frac{\vr}{\tvr(t,x)}-1\right)(\tu(t,x) -\vc{u}) \ril \cdot \left(\Div \bb{S}(\tvt, \bb{D}_x \tu)(t,x)+ \nabla_x p(\tvr,\tvt)(t,x)\right)\\
			& - \lel \Nu_{t,x}; \vr (s(\vr,\vt)- s(\tvr,\tvt)) (\tu-\vc{u})\ril \cdot \nabla_x \tvt \\
			&- \lel \Nu_{t,x}; (\vr-\tvr)(s(\vr,\vt)-s(\tvr,\tvt)) \ril \left( \partial_{t} \tvt + \tu \cdot \nabla_x \tvt \right)\\
			 &+\lel \Nu_{t,x}; \left( 1- \frac{\vr}{\tvr(t,x)}\right)(\tu(t,x) -\vc{u})  \ril  \cdot \nabla_x p(\tvr,\tvt)(t,x) \\
			&+ \lel \Nu_{t,x} ; p(\tvr,\tvt) - \frac{\partial p(\tvr,\tvt)}{\partial \vr} (\tvr-\vr)- \frac{\partial p(\tvr,\tvt)}{\partial \vt} (\tvt -\vt) - p(\vr,\vt) \ril \Div \tu \\
			& + \lel \Nu_{t,x} ; s(\tvr,\tvt) - \frac{\partial s(\tvr,\tvt)}{\partial \vr} (\tvr-\vr)- \frac{\partial s(\tvr,\tvt)}{\partial \vt} (\tvt -\vt) - s(\vr,\vt) \ril \tvr \left( \partial_{t} \tvt + \tu \cdot \nabla_x \tvt \right) 
		\end{split}
	\end{align}
Now we use the compatibility of the Young measure \eqref{mv-vc} and \eqref{mv-tc} to deduce
	\begin{align}\label{REineq2}
		\begin{split}
			\Epsilon_{mv}(\tau) & + \int_0^ \tau \int_{ \Om} \left[ \lel \Nu_{t,x} ; \frac{\tvt}{\vt}\bb{S}(\vr,\vt, \bb{D}_{\vc{u}}) \colon \bb{D}_{\vc{u}} \ril + \lel \Nu_{t,x}; \frac{\vt}{\tvt} \ril \bb{S}(\tvr.\tvt, \nabla_x \tu)\right] \dx\dt\\
			& - \inttom  \lel \Nu_{t,x} \colon \bb{S}(\vr,\vt, \bb{D}_{\vc{u}}) \ril \colon \nabla_x \tu \dx\dt -\inttom \lel \Nu_{t,x}; \bb{D}_\vc{u} \ril \colon \bb{S}(\tvr,\tvt,\nabla_x \tu) \dx \dt\\
			& + \inttom \tvt \lel \Nu_{t,x} ; \frac{\kappa(\vr,\vt)}{\vt}  \vc{D}_\vt \left( \frac{\vc{D}_\vt}{\vt} - \frac{\nabla_x \tvt}{\tvt}\right)\ril  \!\!\dx \dt\\
			& + \inttom \kappa(\tvr,\tvt) \frac{\nabla_x \tvt}{\tvt} \cdot \lel \Nu_{t,x}; \vt \left( \frac{\nabla_x \vt}{\vt}- \frac{\vc{D}_\vt}{\vt} \right) \ril \dx \dt + \mathcal{D}_{\tvt}(\tau)\\
			\leq &\Epsilon_{mv}(0) +  \int_{0}^{\tau} \langle r^M;\nabla_x \tu(t,x) \rangle_{\{\mathcal{M}(\bar{\Om};\R^{d \times d}),C(\bar{\Om};\R^{d \times d})\}} \dt + \inttom R_2 \dx \dt,
		\end{split}
	\end{align} 
	where the term $ R_2 $ is given by \eqref{R2} and it consists of quadratic error terms.
	Let us now fix some notation to write \eqref{REineq2} more precisely. For $ \bb{A}\left(=(a_{ij})_{i,j=1}^{d} \right)\in \R^{d\times d} $, we consider the \textit{symmetric part} and the \textit{traceless part} of $ \bb{A} $ as
	\begin{align*}
		\bb{D}(\bb{A})= \frac{\bb{A}+\bb{A}^T}{2}\text{ and }
		\bb{D}_0(\bb{A}) =\frac{\bb{A}+\bb{A}^T}{2}-\frac{1}{d}\text{Tr}(\bb{A})\, \bb{I},
	\end{align*} 
	respectively, where $ \text{Tr}(\bb{A})  =\sum\limits_{i=1}^d a_{ii}$. 
	We write the Newtonian stress tensor as
	\begin{align*}
		\mathbb{S}(\nabla_x \mathbf{u})&=\mu(\varrho , \vartheta) \bigg(\frac{\nabla_x \vectoru + \nabla_x^{T} \vectoru}{2}-\frac{1}{d} (\Div\vectoru)\mathbb{I} \bigg) + \lambda(\varrho , \vartheta) (\Div \vectoru) \mathbb{I},\\
		&= \mu(\varrho , \vartheta) \bb{D}_0 \vc{u} + \lambda(\varrho,\vt) \Div \vc{u} \bb{I}.
	\end{align*}
	
	We rewrite the inequality \eqref{REineq2} as 
	\begin{align}\label{REineq3}
		\begin{split}
			\Epsilon_{mv}(\tau) &+ \inttom \lel \Nu_{t,x}; \mu(\vr,\vt)  \frac{\tvt}{\vt} \left\vert \bb{D}_0 (\du) - \frac{\vt}{\tvt} \bb{D}_0 (\nabla_x \tu )  \right\vert^2 \ril \dx\dt \\
			&+ \inttom \bb{D}_0(\nabla_x \tu) \colon \lel  \Nu_{t,x}; (\mu(\vr,\vt)- \mu(\tvr,\tvt) )\left( \bb{D}_0 (\du) - \frac{\vt}{\tvt} \bb{D}_0 (\nabla_x \tu )  \right)  \ril\dx\dt\\
			& +\inttom \lel \Nu_{t,x}; \lambda(\vr,\vt)  \frac{\tvt}{\vt} \left\vert \text{Tr} (\du) - \frac{\vt}{\tvt} \Div \tu  \right\vert^2 \ril \dx\dt \\
			&+ \inttom \Div \tu  \lel  \Nu_{t,x}; (\lambda(\vr,\vt)- \lambda(\tvr,\tvt) )\left( \text{Tr} (\du) - \frac{\vt}{\tvt} (\Div \tu )  \right)  \ril\dx\dt\\
			& + \inttom \tvt \lel \Nu_{t,x} ; \kappa(\vr,\vt)  \left\vert \frac{\vc{D}_\vt}{\vt} - \frac{\nabla_x \tvt}{\tvt}\right\vert^2 \ril  \!\!\dx \dt\\
			& + \inttom \kappa(\tvr,\tvt) \frac{\nabla_x \tvt}{\tvt} \cdot \lel \Nu_{t,x}; (\vt-\tvt) \left( \frac{\nabla_x \tvt}{\tvt}- \frac{\vc{D}_{\vt}}{\vt} \right) \ril \dx \dt \\
			&- \inttom {\nabla_x \tvt} \cdot \lel \Nu_{t,x}; (\kappa(\vr,\vt)-\kappa(\tvr,\tvt) \left( \frac{\nabla_x \tvt}{\tvt}- \frac{\vc{D}_{\vt}}{\vt} \right) \ril \dx \dt + \mathcal{D}_{\tvt}(\tau)\\
			\leq &\Epsilon_{mv}(0) +  \int_{0}^{\tau} \langle r^M;\nabla_x \tu \rangle_{\{\mathcal{M}(\bar{\Om};\R^{d \times d}),C(\bar{\Om};\R^{d \times d})\}} \dt + \inttom R_2 \dx \dt.
		\end{split}
	\end{align} 
The inequality \eqref{REineq3} is called the \emph{relative energy inequality} associated with the problem. Therefore we summarize the above discussion in the following lemma:
\begin{Lemma}\label{lem-1}
Let the transport coefficients $ \kappa(\varrho,\vartheta) $, $ \mu(\varrho,\vt)  $ and $ \lambda(\vr,\vt) $ be continuously differentiable and positive for $ \varrho >0,\; \vt >0  $. Let the thermodynamic functions satisfy Gibbs equation\eqref{GE} and the thermodynamic stability assumption\eqref{ther-stab}. Let $ \{\Nu, \mathcal{D}_{\tvt}\}$ be a measure valued solution of the system \eqref{NSF-c}-\eqref{NSF-e} with initial data $ \Nu_0 $ and $ \{\tvr, \tu,\tvt\} $ be a strong solution with sufficient regularity and initial data. Then we have the inequality \eqref{REineq3} holds, where the remainder term $ R_2 $ is given by \eqref{R2}.
	
\end{Lemma}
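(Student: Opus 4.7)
The plan is to follow the bookkeeping already scaffolded in the excerpt, justifying each step as a legitimate substitution into the measure-valued machinery rather than performing a new calculation. Starting from the expansion \eqref{rel2} of $\Epsilon_{mv}(\tau)$ into the four pieces $\mathcal{L}_1,\dots,\mathcal{L}_4$, I would treat each one as a test-function identity: $\mathcal{L}_1$ is controlled by the ballistic energy inequality \eqref{Bal-eng-ineq} taken with $\tT=\tvt$; $\mathcal{L}_2$ is obtained by choosing $\vectorphi=\tu$ in the momentum balance \eqref{mv-mom-eqn}, which is admissible because $\tu|_{\partial\Omega}=0$; $\mathcal{L}_3$ is obtained by choosing $\psi=\tfrac12|\tu|^2-\partial_\vr H_{\tvt}(\tvr,\tvt)$ in the continuity equation \eqref{mv-cont-eqn}; and $\mathcal{L}_4$ is handled by the elementary time identity $\intomega p(\tvr,\tvt)(\tau,\cdot)\dx - \intomega p(\tvr,\tvt)(0,\cdot)\dx = \int_0^\tau\intomega \partial_t p(\tvr,\tvt)\dx\dt$. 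Summing these four contributions and invoking Gibbs' relation \eqref{GE} to re-express derivatives of $H_{\tvt}$ through $p$, $s$, $e$ yields exactly the preliminary inequality \eqref{rel3}.

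Next I would specialise to the case when $(\tvr,\tu,\tvt)$ is a classical solution of the Navier--Stokes--Fourier system, using the pointwise equations satisfied by the strong solution to rewrite the source terms on the right-hand side. Concretely, $\partial_t\tu$ is replaced using the strong momentum equation (producing the $\nabla_x p(\tvr,\tvt)$ and $\Div\bb{S}(\tvr,\tvt,\nabla_x\tu)$ contributions in $R_1$), and $\partial_t\tvr$ is replaced using the continuity equation; a Taylor expansion of $p$ and $s$ around $(\tvr,\tvt)$ then delivers the higher-order quadratic remainders that get absorbed into $R_2$, together with the identity $\Div(\kappa(\tvr,\tvt)\nabla_x\tvt/\tvt)$ that arises when integrating the temperature gradient terms by parts against $\tvr(\partial_t\tvt+\tu\cdot\nabla_x\tvt)$ via the internal energy equation for the strong solution.

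At this stage the Young-measure compatibility identities \eqref{mv-vc} and \eqref{mv-tc} are used to swap the smooth targets $\nabla_x\tu$ and $\nabla_x\tvt$ into the measure brackets alongside the kinematic variables $\bb{D}_{\vc{u}}$ and $\vc{D}_\vt$: this is precisely the step that converts the mixed cross-terms into the completed-square form exhibited in \eqref{REineq2}. Finally, substituting the Newtonian decomposition $\bb{S}=\mu\,\bb{D}_0\vu+\lambda\,\Div\vu\,\bb{I}$ and grouping the $\mu$-, $\lambda$-, and $\kappa$-pieces separately produces the three pairs of quadratic/cross terms in \eqref{REineq3}, with the differences $\mu(\vr,\vt)-\mu(\tvr,\tvt)$, $\lambda(\vr,\vt)-\lambda(\tvr,\tvt)$, $\kappa(\vr,\vt)-\kappa(\tvr,\tvt)$ collecting what remains of the mismatch between the measure-valued and smooth viscosities.

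The main obstacle I anticipate is not any single estimate but the algebraic bookkeeping in passing from \eqref{rel3} to \eqref{REineq2}: one must simultaneously eliminate the linear-in-$(\vu-\tu)$ and linear-in-$(\vt-\tvt)$ terms coming from the strong momentum and internal energy equations, produce the completed squares centred at $(\nabla_x\tu,\nabla_x\tvt/\tvt)$, and ensure that every leftover is genuinely quadratic and can be packed into $R_2$ as in \eqref{R2}. The thermodynamic stability \eqref{ther-stab} plays no role in the derivation itself, but it is what makes the remainder $R_2$ dominated by $\Epsilon_{mv}$ in subsequent applications; here the task is purely to verify that, modulo the $r^M$ and $\mathcal{D}_{\tvt}$ contributions already provided by the measure-valued definition, no linear or mixed terms survive.
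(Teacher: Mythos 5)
Your proposal is correct and follows essentially the same route the paper takes: expand $\Epsilon_{mv}$ into the four pieces $\mathcal{L}_1,\dots,\mathcal{L}_4$, feed in the ballistic energy inequality, momentum equation with $\vectorphi=\tu$, continuity equation, and the elementary time-derivative identity, then pass to the strong-solution equations, apply Gibbs' relation and the compatibility identities \eqref{mv-vc}--\eqref{mv-tc}, and finally decompose $\bb{S}$ to complete the squares. The only minor imprecision is your reference to the ``internal energy equation'' of the strong solution where what is actually invoked (via Gibbs) is its entropy equation, producing the $\Div(\kappa(\tvr,\tvt)\nabla_x\tvt/\tvt)$ term; this does not affect the validity of the argument.
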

\subsection{A suitable reduction of relative energy inequality}
In this subsection we will try to reduce the \eqref{REineq3}. Since we already notice that the relative energy is a non-negative functional and the remainder term $ R_2 $ contains certain quadratic terms. Therefore we introduce the next part to have a close look on relative energy.
\subsubsection{Essential and residual part of a function}\label{ess-res-ss}
	At first, we introduce a cut-off function
	$\chi_\delta $ such that
	\begin{align}\nonumber
		\begin{split}
			\chi_\delta \in C_c^{\infty}	\left((0, \infty)^2\right),\; 0 \leq \chi_\delta \leq 1,\; \chi_\delta(\varrho,\vt) = 1  \mbox{if}\ \delta \leq \varrho \leq \frac{1}{\delta} \text{ and }\delta \leq \vt \leq \frac{1}{\delta} \text{ for some }\delta>0.
		\end{split}
	\end{align}
	For a function $H = H(\vr,\vt,\vc{u}, \du, \dvt)$, we set
	\begin{align}\nonumber
		\begin{split}
			[H]_{\text{ess}}=\chi_\delta(\varrho,\vt) H(\vr,\vt,\vc{u}, \du, \dvt) ,\; [H]_{\text{res}}=(1- \chi_\delta(\varrho,\vt)) H(\vr,\vt,\vc{u}, \du, \dvt).
		\end{split}
	\end{align}	
	If $ \tvr, \tvt $ is strictly positive, bounded above and bounded below, $ \vert \tu \vert $ is also bounded, then with the help of the above notation, we have
	\begin{align}\label{RE_ess_res}
		\begin{split}
		E (\vrthu\mid \tildevrthu)
		\geq c(\delta,\tvr,\tvt,\tu) \bigg( & \left[ \levert \vr-\tvr \rivert^2+\levert \vt-\tvt \rivert^2+\levert \vc{u}-\tu \rivert^2 \right]_{\text{ess}} \\
		& + \left[ 1 +\varrho+ \vr \vert s(\vr, \vt) \vert+ \vr e(\vr,\vt) + \vr \vert \vc{u} \vert^2 \right]_{\text{res}} \bigg).
		\end{split}
	\end{align}
\begin{Rem}
	The inequality \eqref{RE_ess_res} implies that the relative energy functional is coercive.
\end{Rem}
At this point, our goal is either to control the remainder term $ R_2$ of \eqref{REineq3}, by the integral of the relative energy or to absorb it into a non-negative term on the left-hand side of \eqref{REineq3}. 
\begin{Lemma}
	Let the hypothesis in Lemma \ref{lem-1} remains true. Then with the help of \eqref{RE_ess_res}, the inequality \eqref{REineq3} reduces to
	\begin{align}\label{REineq4}
		\begin{split}
			\Epsilon_{mv}(\tau) &+ \inttom \lel \Nu_{t,x}; \mu(\vr,\vt)  \frac{\tvt}{\vt} \left\vert \bb{D}_0 (\du) - \frac{\vt}{\tvt} \bb{D}_0 (\nabla_x \tu )  \right\vert^2 \ril \dx\dt \\
			&+ \inttom \bb{D}_0(\nabla_x \tu) \colon \lel  \Nu_{t,x}; (\mu(\vr,\vt)- \mu(\tvr,\tvt) )\left( \bb{D}_0 (\du) - \frac{\vt}{\tvt} \bb{D}_0 (\nabla_x \tu )  \right)  \ril\dx\dt\\
			& +\inttom \lel \Nu_{t,x}; \lambda(\vr,\vt)  \frac{\tvt}{\vt} \left\vert \text{Tr} (\du) - \frac{\vt}{\tvt} \Div \tu  \right\vert^2 \ril \dx\dt \\
			&+ \inttom \Div \tu  \lel  \Nu_{t,x}; (\lambda(\vr,\vt)- \lambda(\tvr,\tvt) )\left( \text{Tr} (\du) - \frac{\vt}{\tvt} (\Div \tu )  \right)  \ril\dx\dt\\
			& + \inttom \tvt \lel \Nu_{t,x} ; \kappa(\vr,\vt)  \left\vert \frac{\vc{D}_\vt}{\vt} - \frac{\nabla_x \tvt}{\tvt}\right\vert^2 \ril  \!\!\dx \dt\\
			& + \inttom \kappa(\tvr,\tvt) \frac{\nabla_x \tvt}{\tvt} \cdot \lel \Nu_{t,x}; (\vt-\tvt) \left( \frac{\nabla_x \tvt}{\tvt}- \frac{\vc{D}_{\vt}}{\vt} \right) \ril \dx \dt \\
			&- \inttom {\nabla_x \tvt} \cdot \lel \Nu_{t,x}; (\kappa(\vr,\vt)-\kappa(\tvr,\tvt) \left( \frac{\nabla_x \tvt}{\tvt}- \frac{\vc{D}_{\vt}}{\vt} \right) \ril \dx \dt + \mathcal{D}_{\tvt}(\tau)\\
			\leq &\Epsilon_{mv}(0) + C(\delta,\tildevrthu)  \int_{0}^\tau \Epsilon_{mv}(t) \dt + \int_{0}^\tau \mathcal{D}_{\tvt}(t) \dt \\
			& + \int_{0}^\tau \int_{ \Om} \lel  \Nu_{t,x};  \left[\vt +\vert p(\vr,\vt) \vert + \vert \vc{u}-\tu \vert +  \vr \vert s(\vr, \vt) \vert\; \vert \vc{u} \vert  \right]_{\text{res}} \ril \dx\dt .
		\end{split}
	\end{align} 
\end{Lemma}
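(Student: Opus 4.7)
The plan is to show that every term on the right-hand side of \eqref{REineq3} which does not already appear in \eqref{REineq4}, namely the defect contribution $\int_0^\tau \langle r^M;\nabla_x \tu\rangle\, \dt$ and the seven pieces $I_1,\ldots,I_7$ composing $R_2$ in \eqref{R2}, can be absorbed into one of the three terms on the right-hand side of \eqref{REineq4}. The defect piece is dispatched immediately by the compatibility assumption \eqref{def-m-comp}: since $\tu$ is smooth on $[0,T]\times\Omegabar$, the quantity $\|\nabla_x \tu(t)\|_{C(\bar{\Om})}$ is uniformly bounded, so
\[
\left|\int_0^\tau \langle r^M;\nabla_x \tu\rangle \dt\right| \leq C(\tu)\int_0^\tau \xi(t)\mathcal{D}_{\tvt}(t)\dt,
\]
which, with $\xi\in L^1(0,T)$, is of the form $\int_0^\tau \mathcal{D}_{\tvt}(t)\dt$ modulo an $L^1$-weight that will be harmless for the subsequent Gronwall argument.

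For each piece $I_j$ of $R_2$, I would split the integrand using the cut-off $\chi_\delta$ of Section~\ref{ess-res-ss} and treat $[I_j]_{\text{ess}}$ and $[I_j]_{\text{res}}$ separately. On the essential set $\{\delta\leq\vr\leq 1/\delta,\,\delta\leq\vt\leq 1/\delta\}$, smoothness of $p$ and $s$ together with boundedness of $(\tvr,\tvt,\tu)$ and their derivatives yields via Taylor expansion around $(\tvr,\tvt)$ the bounds $|s(\vr,\vt)-s(\tvr,\tvt)|\leq C(\delta,\tildevrthu)(|\vr-\tvr|+|\vt-\tvt|)$ and $|\vr/\tvr-1|\leq C(\delta,\tildevrthu)|\vr-\tvr|$, while the Bregman remainders appearing in $I_6, I_7$ are even quadratic in $(\vr-\tvr,\vt-\tvt)$ by the second-order form of Taylor's theorem. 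Combining with Cauchy--Schwarz and Young's inequality, every $[I_j]_{\text{ess}}$ is dominated by $C(\delta,\tildevrthu)[|\vr-\tvr|^2+|\vt-\tvt|^2+\vr|\vc{u}-\tu|^2]_{\text{ess}}$, which is $\leq C(\delta,\tildevrthu)\Epsilon_{mv}$ thanks to the coercivity \eqref{RE_ess_res}.

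On the residual set the differences $(\vr-\tvr), (\vt-\tvt)$ need not be small, so the quadratic bounds fail and the triangle inequality is instead used to split each $[I_j]_{\text{res}}$ into a sum of non-negative terms, each majorised either by the coercive list $[1+\vr+\vr|s(\vr,\vt)|+\vr e(\vr,\vt)+\vr|\vc{u}|^2]_{\text{res}}$ of \eqref{RE_ess_res}, hence by $\Epsilon_{mv}$, or by the short list $[\vt+|p(\vr,\vt)|+|\vc{u}-\tu|+\vr|s(\vr,\vt)||\vc{u}|]_{\text{res}}$ that appears on the right-hand side of \eqref{REineq4}. For example, $[(\vr-\tvr)(s(\vr,\vt)-s(\tvr,\tvt))]_{\text{res}}$ splits via $\vr|s(\vr,\vt)|$ plus lower-order residual pieces; $[I_6]_{\text{res}}$ uses that $|p(\vr,\vt)|$ is explicitly in the list; and convective factors are broken up through $\vr|\vc{u}-\tu|^2\leq 2\vr|\vc{u}|^2+2\vr|\tu|^2$.

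The main obstacle is not analytical but organisational: one must verify, term by term, that after the triangle inequality the surviving residual factor is genuinely covered either by the coercive list in \eqref{RE_ess_res} or by the short residual list displayed in \eqref{REineq4}, and that the accompanying "good" multiplier belongs to $L^\infty$ (which is why strict positivity and smoothness of $(\tvr,\tvt,\tu)$ are essential). Particular care is required for $I_3$ and $I_4$, where the $\vc{u}$-factor rather than $\tu$ must carry the problematic residual weight so that the resulting term matches $\vr|s(\vr,\vt)||\vc{u}|$. Once this bookkeeping is complete for all seven pieces and inserted back into \eqref{REineq3}, the inequality \eqref{REineq4} follows directly.
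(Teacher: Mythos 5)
Your proposal matches the paper's proof in its essential structure: split the remainder $R_2$ into essential and residual parts via the cut-off $\chi_\delta$, use the quadratic structure and the coercivity \eqref{RE_ess_res} to absorb $[R_2]_{\text{ess}}$ into $C(\delta,\tildevrthu)\int_0^\tau\Epsilon_{mv}\,\dt$, and either absorb $[R_2]_{\text{res}}$ by the coercive list in \eqref{RE_ess_res} or retain it in the residual list of \eqref{REineq4}, with the defect-measure term handled via \eqref{def-m-comp}. Your explicit remark that \eqref{def-m-comp} only gives $\int_0^\tau\xi(t)\mathcal{D}_{\tvt}(t)\,\dt$ with $\xi\in L^1$ rather than the un-weighted $\int_0^\tau\mathcal{D}_{\tvt}(t)\,\dt$ stated in \eqref{REineq4} is a legitimate and useful clarification that the paper elides; it is harmless because the subsequent Gr\"onwall argument tolerates an $L^1$ coefficient.
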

\begin{proof}
The proof is quite straight forward, since $ R_2  $ contains quadratic terms, therefore the essential parts of $ R_2 $ will be controlled by $ C(\delta,\tildevrthu)  \int_{0}^\tau \Epsilon_{mv}(t) \dt  $. For the residual parts of $ R_2 $, we control some of the terms that are compatible with the right hand side of \eqref{RE_ess_res} and we keep the other terms we in the right hand side of the \eqref{REineq4}.
\end{proof}
	\section{Main results: Weak (measure valued)-strong uniqueness }\label{s4}
	\subsection{Conditional Weak(measure valued)--strong uniqueness}
	By the term `conditional', we mean that, along with the measure valued solution if we assume some additional hypothesis, them we will able to achieve the desired  \emph{Weak(measure valued)--strong uniqueness} property.
	\subsubsection{First conditional result}
	First conditional result when density and temperature are uniformly bounded.
	\begin{Th}\label{thc1}
		Let the transport coefficients $ \kappa(\varrho,\vartheta) $, $ \mu(\varrho,\vt)  $ and $ \lambda(\vr,\vt) $ be continuously differentiable and positive for $ \varrho >0,\; \vt >0  $. Let the thermodynamic functions satisfy Gibbs equation \eqref{GE} and the thermodynamic stability \eqref{ther-stab}. Further assume that $ (\tilde{\vr}, \tilde{\vc{u}}, \tilde{\vt} )$ be a strong solution of the system in $ [0,T]\times \Om  $ emanating from the initial data $ (\vr_0, \vc{u}_0, \vt_0 )$ with $ \vr_0>0, \vt_0 $.  Assume that $ \Nu $ be a measure valued solution of the same problem following the Definition \ref{def:m} and it satisfies  		\begin{align}\label{hyp_con1}
			\Nu_{t,x} \left\{0< \underline{\vr} < \varrho < \Ov{\vr},\; 0<\underline{\vt}< \vt < \overline{\vt}  \right\} =1 {\text{ for a.e. }} (t,x)\in Q_T,
		\end{align}
		for some constants $ \underline{\vr}, \Ov{\vr},\Un{\vt},\Ov{\vt} >0 $ and initial data coincides, i.e.
		\begin{align*}
			\Nu_{0,x}= \delta_{[\vr_0,\vc{u}_0,\vt_0]} \text{ for a.e. } x\in \Om. 
		\end{align*}
		Then \begin{align*}
			\Nu_{t,x}= \delta_{[ \tvr(t,x), \tu(t,x), \tvt(t,x), \bb{D}_x \tu(t,x), \nabla_x \tvt(t,x)]}\text{ for a.e. } (t,x) \in Q_T.
		\end{align*}
	\end{Th}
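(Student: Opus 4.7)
The plan is to derive $\Epsilon_{mv}(\tau) = 0$ and $\mathcal{D}_{\tvt}(\tau) = 0$ for a.e.\ $\tau \in (0,T)$ by turning the relative energy inequality \eqref{REineq4} into a Gronwall-type estimate. Once this is established, the coercivity property \eqref{RE_ess_res}, together with the compatibility identities \eqref{mv-vc}, \eqref{mv-tc} and the Korn--Poincar\'e inequality \eqref{mv-KP}, will force $\Nu_{t,x}$ to concentrate on the strong-solution state $[\tvr(t,x), \tu(t,x), \tvt(t,x), \bb{D}_x \tu(t,x), \nabla_x \tvt(t,x)]$.

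First I would exploit the uniform essential bounds \eqref{hyp_con1}: choosing the cut-off parameter $\delta$ smaller than $\min(\underline{\vr}, \underline{\vt}, 1/\Ov{\vr}, 1/\Ov{\vt})$ ensures that $\chi_\delta \equiv 1$ on the support of $\Nu_{t,x}$. Consequently every residual bracket $[\,\cdot\,]_{\mathrm{res}}$ in \eqref{REineq4} vanishes identically, so the last integral on the right of \eqref{REineq4} drops out. Under the same bounds, the transport coefficients $\mu, \lambda, \kappa$, the pressure and the entropy are Lipschitz on the effective range of $(\vr,\vt)$, so differences such as $\mu(\vr,\vt) - \mu(\tvr,\tvt)$ and $s(\vr,\vt)-s(\tvr,\tvt)$ are pointwise dominated by $|\vr-\tvr|+|\vt-\tvt|$, which in turn is controlled by the relative energy density $E(\relenttuple)$ in the essential regime.

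I would then absorb all indefinite cross terms on the left of \eqref{REineq4} --- those multiplying $\bb{D}_0(\nabla_x \tu)$, $\Div \tu$ and $\nabla_x \tvt/\tvt$ --- into the coercive quadratic dissipation terms via Young's inequality with a small parameter, the leftover being bounded by a constant multiple of $\Epsilon_{mv}(t)$. The defect measure contribution $\int_0^\tau \langle r^M ; \nabla_x \tu \rangle \, \dt$ is handled through \eqref{def-m-comp}, giving the bound $\int_0^\tau \xi(t) \mathcal{D}_{\tvt}(t)\, \|\tu(t)\|_{C^1(\Ov{\Om})}\, \dt$ with $\xi \in L^1(0,T)$ and $\tu$ smooth. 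Combining these estimates reduces \eqref{REineq4} to
\[
\Epsilon_{mv}(\tau) + \mathcal{D}_{\tvt}(\tau) \leq \Epsilon_{mv}(0) + \int_0^\tau \eta(t)\bigl(\Epsilon_{mv}(t) + \mathcal{D}_{\tvt}(t)\bigr)\, \dt,
\]
with $\eta \in L^1(0,T)$. Since the initial Young measure is a Dirac mass at the strong-solution data, $\Epsilon_{mv}(0) = 0$, and the integral Gronwall lemma yields $\Epsilon_{mv} \equiv 0$ and $\mathcal{D}_{\tvt} \equiv 0$.

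Finally, the vanishing of $\Epsilon_{mv}$ together with the coercivity \eqref{RE_ess_res} concentrates $\Nu_{t,x}$ onto $(\tvr, \tu, \tvt)$ in the first three slots. The vanishing of the remaining coercive squares on the left of \eqref{REineq4} forces $\bb{D}_0(\du) = \bb{D}_0(\nabla_x \tu)$, $\mathrm{Tr}(\du) = \Div \tu$ and $\vc{D}_\vt = \nabla_x \tvt$ on the support of $\Nu_{t,x}$, pinning down the gradient slots; \eqref{mv-vc} and \eqref{mv-tc} guarantee that these pointwise slot values are the correct macroscopic gradients. The main obstacle I anticipate is the bookkeeping of the heat-flux cross term that couples $(\vt - \tvt)$, which is controlled by $E$, with $\nabla_x \tvt/\tvt - \vc{D}_\vt/\vt$, which is controlled only through the $\kappa$-quadratic dissipation; closing the Gronwall loop requires that the small-parameter absorption commute with the Young-measure averaging, and this is only possible thanks to the Lipschitz control of $\kappa$ on the essential regime furnished by \eqref{hyp_con1}.
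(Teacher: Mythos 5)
Your proposal follows essentially the same route as the paper's proof: use the uniform bounds \eqref{hyp_con1} to choose $\delta$ so that $\chi_\delta\equiv 1$ on the support of $\Nu_{t,x}$, which annihilates the residual terms in \eqref{REineq4} and makes $\mu,\lambda,\kappa,p,s$ uniformly Lipschitz and bounded above/below on the effective range; bound the cross terms by Young's inequality against the coercive quadratic dissipations plus a multiple of $\Epsilon_{mv}$; absorb the defect-measure term via \eqref{def-m-comp}; and then close with Gr\"onwall from $\Epsilon_{mv}(0)=0$, with the surviving non-negative dissipations forcing the gradient-slot concentration. The only cosmetic difference is that you invoke \eqref{mv-KP} in the final concentration step, which is not actually needed for this theorem (the $\mu,\lambda,\kappa$ quadratics already pin down $\bb{D}_{\vc u}$ and $\vc D_\vt$ directly), but this does not affect correctness.
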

\begin{proof}
	
	At first we note that there exists a $ \delta >0 $, such that $ \delta \leq \min\{ \Un{\vr}, \Un{\vt}\} $ and $ \max{\{\Ov{\vr},\Ov{\vt}\}} \leq \frac{1}{\delta}$.  There for we consider a cut-off function $ \chi_\delta $ as mentioned in \ref{ess-res-ss}. The hypothesis \eqref{REineq4} implies \begin{align*}
		\int_{0}^\tau \int_{ \Om} \lel  \Nu_{t,x};  \left[\vt +\vert p(\vr,\vt) \vert + \vert \vc{u}-\tu \vert +  \vr \vert s(\vr, \vt) \vert\; \vert \vc{u} \vert  \right]_{\text{res}} \ril \dx\dt =0.
	\end{align*} 
Also, form the assumption \eqref{hyp_con1}, we observe 

	\begin{align*}
		& \inttom \lel \Nu_{t,x}; \mu(\vr,\vt)  \frac{\tvt}{\vt} \left\vert \bb{D}_0 (\du) - \frac{\vt}{\tvt} \bb{D}_0 (\nabla_x \tu )  \right\vert^2 \ril \dx\dt \\
		&+\inttom \lel \Nu_{t,x}; \lambda(\vr,\vt)  \frac{\tvt}{\vt} \left\vert \text{Tr} (\du) - \frac{\vt}{\tvt} \Div \tu  \right\vert^2 \ril \dx\dt \\
		& + \inttom \tvt \lel \Nu_{t,x} ; \kappa(\vr,\vt)  \left\vert \frac{\vc{D}_\vt}{\vt} - \frac{\nabla_x \tvt}{\tvt}\right\vert^2 \ril  \!\!\dx \dt\\
		&\geq \underline{C}(\delta, \tildevrthu)  \inttom \bigg\langle \Nu_{t,x};  \left\vert \bb{D}_0 (\du) - \frac{\vt}{\tvt} \bb{D}_0 (\nabla_x \tu )  \right\vert^2 \\
		&\hspace{42mm}+\left\vert \text{Tr} (\du) - \frac{\vt}{\tvt} \Div \tu  \right\vert^2  
		+ \left\vert \frac{\vc{D}_\vt}{\vt} - \frac{\nabla_x \tvt}{\tvt}\right\vert^2  \bigg\rangle \dx\dt \\
	\end{align*}
and 
\begin{align*}
	&+ \inttom \bb{D}_0(\nabla_x \tu) \colon \lel  \Nu_{t,x}; (\mu(\vr,\vt)- \mu(\tvr,\tvt) )\left( \bb{D}_0 (\du) - \frac{\vt}{\tvt} \bb{D}_0 (\nabla_x \tu )  \right)  \ril\dx\dt\\
	&+ \inttom \Div \tu  \lel  \Nu_{t,x}; (\lambda(\vr,\vt)- \lambda(\tvr,\tvt) )\left( \text{Tr} (\du) - \frac{\vt}{\tvt} (\Div \tu )  \right)  \ril\dx\dt\\
	&- \inttom {\nabla_x \tvt} \cdot \lel \Nu_{t,x}; (\kappa(\vr,\vt)-\kappa(\tvr,\tvt) \left( \frac{\nabla_x \tvt}{\tvt}- \frac{\vc{D}_{\vt}}{\vt} \right) \ril \dx \dt \\
	& + \inttom \kappa(\tvr,\tvt) \frac{\nabla_x \tvt}{\tvt} \cdot \lel \Nu_{t,x}; (\vt-\tvt) \left( \frac{\nabla_x \tvt}{\tvt}- \frac{\vc{D}_{\vt}}{\vt} \right) \ril \dx \dt \\
	&\leq C(\epsilon, \tildevrthu)  \inttom \lel \Nu_{t,x}; \levert \vr-\tvr \rivert^2+\levert \vt-\tvt \rivert^2 \ril \dx \dt \\
	&+ \epsilon  \inttom \bigg\langle \Nu_{t,x};  \left\vert \bb{D}_0 (\du) - \frac{\vt}{\tvt} \bb{D}_0 (\nabla_x \tu )  \right\vert^2 \\
	&\hspace{42mm}+\left\vert \text{Tr} (\du) - \frac{\vt}{\tvt} \Div \tu  \right\vert^2  
	+ \left\vert \frac{\vc{D}_\vt}{\vt} - \frac{\nabla_x \tvt}{\tvt}\right\vert^2  \bigg\rangle \dx\dt ,
\end{align*}
where $ \underline{C} (\delta,\tildevrthu)>0 $.
Therefore, choosing $ \epsilon $ suitably, we deduce
	\begin{align}\label{ws1,1}
	\begin{split}
		\Epsilon_{mv}(\tau) &+ \frac{1}{2}\underline{C} (\delta,\tildevrthu)  \inttom \bigg\langle \Nu_{t,x};  \left\vert \bb{D}_0 (\du) - \frac{\vt}{\tvt} \bb{D}_0 (\nabla_x \tu )  \right\vert^2 \\
		&\hspace{42mm}+\left\vert \text{Tr} (\du) - \frac{\vt}{\tvt} \Div \tu  \right\vert^2  
		+ \left\vert \frac{\vc{D}_\vt}{\vt} - \frac{\nabla_x \tvt}{\tvt}\right\vert^2  \bigg\rangle \dx\dt + \mathcal{D}_{\tvt}(\tau)\\
		\leq &\Epsilon_{mv}(0) + C(\delta,\tildevrthu)  \int_{0}^\tau \Epsilon_{mv}(t) \dt + \int_{0}^\tau \mathcal{D}_{\tvt}(t) \dt .
	\end{split}
\end{align} 
 Employing Gr\" onwall's lemma we obtain the desired result.
\end{proof}
 	\subsubsection{Second conditional result}
 Our next result is for a more physical situation with where pressure law is boyle's law. But the weak-strong uniqueness we prove is imposing certain condition on entropy and additional hypothesis on transport coefficients. 
 \begin{itemize}
 	\item The bulk and shear viscosity coefficient are given by 
 	\begin{align}\label{tc1}
 		\mu(\vr,\vt)= C_\mu (1+\vt) \text{ and } 	\lambda(\vr,\vt)= C_\lambda (1+\vt) \text{ with } C_\mu>0,\; \C_\lambda \geq 0. 
 	\end{align}
 \item The heat conductivity coefficient is given by 
 \begin{align}\label{tc2}
 	\kappa(\vr,\vt) = \kappa(1+\vt) \text{ with } \kappa >0.
 \end{align}
 \end{itemize}

\begin{Th}\label{thc2}
		Let the transport coefficients $ \kappa(\varrho,\vartheta) $, $ \mu(\varrho,\vt)  $ and $ \lambda(\vr,\vt) $ be given by \eqref{tc1} and \eqref{tc2}. Let the thermodynamic functions satisfy Gibbs equation and the thermodynamic stability assumption with \begin{align}\label{pr-law-per}
			p(\vr,\vt)=\vr \vt,\; 	e(\vr,\vt)=c_v \vt ,\; 	s(\vr,\vt)=\log \left(\frac{\vt^{c_v}}{\vr}\right),\; c_v>1.
		\end{align}
	Further assume that $ (\tilde{\vr}, \tilde{\vc{u}}, \tilde{\vt}) $ be a strong solution of the system in $ [0,T]\times \Om  $ emanating from the initial data $ (\vr_0, \vc{u}_0, \vt_0) $ with $ \vr_0>0, \vt_0 $.  Assume that $ \Nu $ be a measure valued solution of the same problem following the Definition \ref{def-m-comp} and it satisfies  		\begin{align}\label{hyp_con2}
		\Nu_{t,x} \left\{ \vert s(\vr,\vt) \vert \leq \bar{s}  \right\} =1 {\text{ for a.e. }} (t,x)\in Q_T,
	\end{align}
	for some $ \bar{s}>0 $ and 
	\begin{align*}
		\Nu_{0,x}= \delta_{[\vr_0,\vc{u}_0,\vt_0]} \text{ for a.e. } x\in \Om. 
	\end{align*}
	Then \begin{align*}
		\Nu_{t,x}= \delta_{[ \tvr(t,x), \tu(t,x), \tvt(t,x), \bb{D}_x \tu(t,x), \nabla_x \tvt(t,x)]}\text{ for a.e. } (t,x) \in Q_T.
	\end{align*}
\end{Th}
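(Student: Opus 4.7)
The plan is to follow the architecture of the proof of Theorem~\ref{thc1}: starting from the reduced relative energy inequality \eqref{REineq4}, I would estimate the new residual integrand by a multiple of $\Epsilon_{mv}$ (plus terms absorbable in the dissipation), and then close by Gr\"onwall's lemma using $\Epsilon_{mv}(0) = 0$. The key reformulation is that under \eqref{pr-law-per} the entropy reads $s(\vr,\vt) = \log(\vt^{c_v}/\vr)$, so \eqref{hyp_con2} is equivalent to the two-sided coupling
\[
e^{-\bar s}\vr \leq \vt^{c_v} \leq e^{\bar s}\vr.
\]
This ties $\vr$ and $\vt$ logarithmically, and hence, for sufficiently small $\delta>0$, on the residual set $\{(1-\chi_\delta)(\vr,\vt) > 0\}$ neither $\vr$ can be large with $\vt$ small nor the converse. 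This replaces the pointwise bounds used in Theorem~\ref{thc1}.

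With this coupling I would bound each term of the residual integrand in \eqref{REineq4} separately. Since $p(\vr,\vt) = \vr\vt = c_v^{-1}\vr e(\vr,\vt)$ one has $[|p|]_{\text{res}} \leq C\,[\vr e]_{\text{res}}$. For $[\vt]_{\text{res}}$, on $\{\vt > 1/\delta\}$ the coupling forces $\vr \geq c(\delta,\bar s)>0$, so $\vt \leq c(\delta,\bar s)^{-1} \vr \vt \leq C \vr e$; on the remaining portion of the residual $\vt$ is bounded by a constant, whence $[\vt]_{\text{res}} \leq C[1 + \vr e]_{\text{res}}$. The entropy-velocity term is handled by $\vr|s||\vu| \leq \tfrac{\bar s}{2}(\vr + \vr|\vu|^2)$. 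All three estimates feed into \eqref{RE_ess_res} to yield contributions controlled by $\Epsilon_{mv}$. For the delicate term $[|\vu-\tu|]_{\text{res}}$, write $|\vu-\tu| \leq \tfrac{1}{2}|\vu-\tu|^2 + \tfrac{1}{2}$; the constant piece is part of $[1]_{\text{res}}$, and the squared velocity difference is then reduced to the dissipation on the left-hand side of \eqref{REineq4} via the generalized Korn--Poincar\'e inequality \eqref{mv-KP}.

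At this stage the explicit form \eqref{tc1}, \eqref{tc2} of the transport coefficients is decisive. With $\mu(\vr,\vt) = C_\mu(1+\vt)$ one has $\mu\,\tvt/\vt = C_\mu(1+\vt)\tvt/\vt \geq C_\mu\,\tvt$ uniformly in $\vt > 0$, so the first dissipation block in \eqref{REineq4} controls $\int_0^\tau\!\!\int_\Omega \bigl\langle\Nutx;\bigl|\bb{D}_0(\du) - \tfrac{\vt}{\tvt}\bb{D}_0(\nabla_x\tu)\bigr|^2\bigr\rangle\dx\dt$ without any temperature weight. Combined with the algebraic identity
\[
|\bb{D}_0(\du) - \bb{D}_0(\nabla_x\tu)|^2 \leq 2\,\bigl|\bb{D}_0(\du) - \tfrac{\vt}{\tvt}\bb{D}_0(\nabla_x\tu)\bigr|^2 + 2\left(\tfrac{\vt-\tvt}{\tvt}\right)^{\!2}|\bb{D}_0(\nabla_x\tu)|^2
\]
and \eqref{mv-KP}, this recovers $\int\!\!\int\langle\Nutx;|\vu-\tu|^2\rangle\dx\dt$ up to a correction $(\vt-\tvt)^2$, whose essential part is absorbed by $\Epsilon_{mv}$ and whose residual part is again tamed by the entropy coupling. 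The same strategy, together with $\mu(\vr,\vt)-\mu(\tvr,\tvt) = C_\mu(\vt-\tvt)$ (and analogously for $\lambda$), disposes of the cross terms in the second, fourth and sixth lines of \eqref{REineq4}.

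The main obstacle will be the $\kappa$-block: the heat-flux term $\kappa(\tvr,\tvt)(\nabla_x\tvt/\tvt)\cdot\langle\Nutx;(\vt-\tvt)(\nabla_x\tvt/\tvt - \vc{D}_\vt/\vt)\rangle$ and the companion term with $\kappa(\vr,\vt)-\kappa(\tvr,\tvt)$ pair a coefficient $(1+\vt)$ growing with $\vt$ to temperature differences, and must be split carefully into essential and residual parts; in the residual one must absorb $(\vt-\tvt)^2 \leq C(1 + \vr e)$ via the coupling while keeping $\tvt\,\kappa(\vr,\vt)|\vc{D}_\vt/\vt - \nabla_x\tvt/\tvt|^2$ on the left available for Young-type absorption of the mixed factor $\vc{D}_\vt/\vt - \nabla_x\tvt/\tvt$. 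Once all residual contributions are bounded by $C(\Epsilon_{mv}+\mathcal{D}_{\tvt})$, \eqref{REineq4} yields the Gr\"onwall-type inequality
\[
\Epsilon_{mv}(\tau) + \mathcal{D}_{\tvt}(\tau) \leq C\int_0^\tau \bigl(\Epsilon_{mv}(t) + \mathcal{D}_{\tvt}(t)\bigr)\dt,
\]
and Gr\"onwall's lemma, together with $\Epsilon_{mv}(0) = 0$ coming from $\Nu_{0,x} = \delta_{[\vr_0,\vc{u}_0,\vt_0]}$, forces $\Epsilon_{mv}(\tau) = 0$. The coercivity estimate \eqref{RE_ess_res} then gives the Dirac concentration of $\Nutx$ claimed in the theorem.
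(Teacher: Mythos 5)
Your proposal is correct and follows essentially the same route as the paper's proof of Theorem~\ref{thc2}: start from \eqref{REineq4}, exploit the entropy bound \eqref{hyp_con2} (in the form $\vt^{c_v}\le C(\bar s)\vr$) to convert the residual terms $[\vt]_{\text{res}}$, $[|p|]_{\text{res}}$, $[\vr|s||\vu|]_{\text{res}}$ into quantities controlled by $\Epsilon_{mv}$, split the viscous and heat blocks via the affine structure $\mu=C_\mu(1+\vt)$, $\kappa=\kappa(1+\vt)$ using Young's inequality, absorb $[|\vu-\tu|]_{\text{res}}$ with the generalized Korn--Poincar\'e inequality \eqref{mv-KP}, and conclude with Gr\"onwall. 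The only cosmetic differences are that you invoke the two-sided coupling $e^{-\bar s}\vr\le\vt^{c_v}\le e^{\bar s}\vr$ whereas the paper uses only the one-sided estimate (which suffices), and you bound $|p|$ directly via $p=c_v^{-1}\vr e$ rather than through the generic inequality $|p|\le 1+\vr e+\vr|s|$.
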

\begin{proof}
	
For the sake of simplicity we assume $ C_\lambda=0 $. Using the hypothesis \eqref{tc1}, we get
\begin{align*}
	& \inttom \lel \Nu_{t,x}; \mu(\vr,\vt)  \frac{\tvt}{\vt} \left\vert \bb{D}_0 (\du) - \frac{\vt}{\tvt} \bb{D}_0 (\nabla_x \tu )  \right\vert^2 \ril \dx\dt \\
	& \geq  \inttom \lel \Nu_{t,x}; \frac{1}{2}C_\mu (1+\vt)  \frac{\tvt}{\vt} \left\vert \bb{D}_0 (\du) - \frac{\vt}{\tvt} \bb{D}_0 (\nabla_x \tu )  \right\vert^2 \ril \dx\dt \\
	& +   \inttom \lel \Nu_{t,x}; \frac{1}{4} \tvt \left\vert \bb{D}_0 (\du) -  \bb{D}_0 (\nabla_x \tu )  \right\vert^2 \ril \dx\dt \\
	& - \inttom  \frac{1}{4}\lel \Nu_{t,x}; \tvt  \left\vert \left(1-\frac{\vt}{\tvt}\right) \bb{D}_0 (\nabla_x \tu )  \right\vert^2 \ril \dx\dt 
\end{align*}
and
\begin{align*}
	&\levert \inttom \bb{D}_0(\nabla_x \tu) \colon \lel  \Nu_{t,x}; (\mu(\vr,\vt)- \mu(\tvr,\tvt) )\left( \bb{D}_0 (\du) - \frac{\vt}{\tvt} \bb{D}_0 (\nabla_x \tu )  \right)  \ril\dx\dt \rivert \\
	& \leq \epsilon  \inttom \lel \Nu_{t,x};  \left\vert \bb{D}_0 (\du) - \frac{\vt}{\tvt} \bb{D}_0 (\nabla_x \tu )  \right\vert^2 \ril \dx\dt \\
	&+  C(\epsilon, \tvt,\tu ) \inttom \lel \Nu_{t,x}; \levert \vt-\tvt \rivert^2 \ril \dx \dt. 
\end{align*}
Also, \eqref{tc2} implies
\begin{align*}
	& \inttom \tvt \lel \Nu_{t,x} ; \kappa(1+\vt)  \left\vert \frac{\vc{D}_\vt}{\vt} - \frac{\nabla_x \tvt}{\tvt}\right\vert^2 \ril  \!\!\dx \dt \geq C(\tvt) \inttom \tvt \lel \Nu_{t,x} ;   \left\vert \frac{\vc{D}_\vt}{\vt} - \frac{\nabla_x \tvt}{\tvt}\right\vert^2 \ril  \!\!\dx \dt.
\end{align*}
Similarly, using Young's inequality, we infer the following estimates
\begin{align*}
	 &\levert \inttom \kappa(\tvt) \frac{\nabla_x \tvt}{\tvt} \cdot \lel \Nu_{t,x}; (\vt-\tvt) \left( \frac{\nabla_x \tvt}{\tvt}- \frac{\vc{D}_{\vt}}{\vt} \right) \ril \dx \dt \rivert \\
	 &\leq \epsilon \inttom  \lel \Nu_{t,x} ;   \left\vert \frac{\vc{D}_\vt}{\vt} - \frac{\nabla_x \tvt}{\tvt}\right\vert^2 \ril  \!\!\dx \dt +  C(\epsilon, \tvt ) \inttom \lel \Nu_{t,x}; \levert \vt-\tvt \rivert^2 \ril \dx \dt
\end{align*}
and 
\begin{align*}
	&\levert \inttom {\nabla_x \tvt} \cdot \lel \Nu_{t,x}; (\kappa(\vr,\vt)-\kappa(\tvr,\tvt) \left( \frac{\nabla_x \tvt}{\tvt}- \frac{\vc{D}_{\vt}}{\vt} \right) \ril \dx \dt\rivert\\
	&\leq \epsilon \inttom  \lel \Nu_{t,x} ;   \left\vert \frac{\vc{D}_\vt}{\vt} - \frac{\nabla_x \tvt}{\tvt}\right\vert^2 \ril  \!\!\dx \dt +  C(\epsilon, \tvt ) \inttom \lel \Nu_{t,x}; \levert \vt-\tvt \rivert^2 \ril \dx \dt.
\end{align*}
Now, for pressure law \eqref{pr-law-per}, we note that 
$$ \vert p(\vr,\vt) \vert \leq 1+ \vr e(\vr,\vt) + \vr \vert s(\vr,\vt) \vert .$$ Then using compatibility condition \eqref{mv-KP} and \eqref{RE_ess_res} we obtain
	\begin{align}\label{ws2}
	\begin{split}
		\Epsilon_{mv}(\tau) &+ \frac{1}{2}\inttom \lel \Nu_{t,x}; C_\mu(1+\vr,\vt)  \frac{\tvt}{\vt} \left\vert \bb{D}_0 (\du) - \frac{\vt}{\tvt} \bb{D}_0 (\nabla_x \tu )  \right\vert^2 \ril \dx\dt \\
		&+   \inttom \lel \Nu_{t,x}; \frac{1}{4} \tvt \left\vert \bb{D}_0 (\du) -  \bb{D}_0 (\nabla_x \tu )  \right\vert^2 \ril \dx\dt \\
		& + \inttom \tvt \lel \Nu_{t,x} ; \kappa(1+\vt)  \left\vert \frac{\vc{D}_\vt}{\vt} - \frac{\nabla_x \tvt}{\tvt}\right\vert^2 \ril  \!\!\dx \dt + \mathcal{D}_{\tvt}(\tau)\\
		\leq &\Epsilon_{mv}(0) + C(\delta)  \int_{0}^\tau \Epsilon_{mv}(t) \dt + \int_{0}^\tau \mathcal{D}_{\tvt}(t) \dt \\
		& + \int_{0}^\tau \int_{ \Om} \lel  \Nu_{t,x};  \left[\vt^2 + \vr \vert s(\vr, \vt) \vert\; \vert \vc{u} \vert  \right]_{\text{res}} \ril \dx\dt   +\epsilon \inttom  \lel \Nu_{t,x} ;   \left\vert \frac{\vc{D}_\vt}{\vt} - \frac{\nabla_x \tvt}{\tvt}\right\vert^2 \ril  \!\!\dx \dt \\
		&+  \epsilon  \inttom \lel \Nu_{t,x};  \left\vert \bb{D}_0 (\du) - \frac{\vt}{\tvt} \bb{D}_0 (\nabla_x \tu )  \right\vert^2 \ril \dx\dt \\
		&+  \epsilon  \inttom \lel \Nu_{t,x};  \left\vert \bb{D}_0 (\du) - (\nabla_x \tu )  \right\vert^2 \ril \dx\dt .
	\end{split}
\end{align} 

Now using \eqref{hyp_con2} we have $ 	\vt^{c_v} \leq C(\bar{s}) \vr$ and $ \vert \vr s \vu \vert \leq C(\bar{s} )\left (\vr + \vr \vert \vu \vert^2\right)$.
This implies 
\[ \vt^{c_v+1} \leq \vr \vt = \frac{1}{c_v} \vr e (\vr,\vt). \]
By proper choice of $ \epsilon $, inequality \eqref{ws2} reduces to 
\begin{align*}
		\Epsilon_{mv}(\tau) &+ \frac{1}{4}\inttom \lel \Nu_{t,x}; C_\mu(1+\vr,\vt)  \frac{\tvt}{\vt} \left\vert \bb{D}_0 (\du) - \frac{\vt}{\tvt} \bb{D}_0 (\nabla_x \tu )  \right\vert^2 \ril \dx\dt \\
		&+  \frac{1}{8} \inttom \lel \Nu_{t,x};  \tvt \left\vert \bb{D}_0 (\du) -  \bb{D}_0 (\nabla_x \tu )  \right\vert^2 \ril \dx\dt \\
		& +\frac{1}{2} \inttom \tvt \lel \Nu_{t,x} ; \kappa(1+\vt)  \left\vert \frac{\vc{D}_\vt}{\vt} - \frac{\nabla_x \tvt}{\tvt}\right\vert^2 \ril  \!\!\dx \dt\\
		&+ \mathcal{D}_{\tvt}(\tau)\\
		\leq &\Epsilon_{mv}(0) + C(\delta)  \int_{0}^\tau \Epsilon_{mv}(t) \dt + \int_{0}^\tau \mathcal{D}_{\tvt}(t) \dt .
\end{align*} 
Again, invoking Gr\" onwall's lemma, we complete the proof.
\end{proof}
	\subsection{Unconditional weak(measure valued)--strong uniqueness}
	Here we consider a general pressure law and some structural assumption on the transport coefficients. We call it unconditional result because, here we will not assume any further information on measure valued solution.
	Let us first consider the pressure law in the following way:
	\begin{align}\label{Pr1}
		p(\vr,\vt)= p_M(\vr,\vt)+ p_R(\vr,\vt),
	\end{align}
	where $ p_M $ stands for the \emph{molecular pressure }and $ p_R  $ is the \emph{radiation pressure}. The relation between the molecular pressure $ p_M $ and the associated internal energy $ e_M $ is is given by
	\begin{align}\label{pm,em}
		p_M(\vr,\vt)= \frac{2}{3} \varrho e_M(\vr,\vt). 
	\end{align}
 From the Gibb's relation \eqref{GE}, we have 
	\[p_M(\vr,\vt) = \vt^{\frac{5}{2}} P\left( \frac{\vr}{\vt^{\frac{3}{2}}}\right)  ,\] 
	for some function $ P $. Moreover, following \cite{FN2009b}, we assume that $ P\in C^1[0,\infty) \cap C^5(0,\infty)$,
	\begin{align}
		\begin{split}
			& P(0)=0, \; P^\prime(q)>0,\text{ for all } q>0,\\
			&0< \frac{\frac{5}{3} P(q)- P^\prime(q)q }{q} <c \text{ for all }q < 0, \lim\limits_{q\rightarrow \infty } \frac{P(q)}{q^{\frac{5}{3}}}= \overline{p}>0. 
		\end{split}
	\end{align}
	We rewrite the internal energy $ (e_M) $ associated with molecular pressure as \[ e_M(\vr)= \frac{3}{2} \frac{\vt^{\frac{5}{2}}}{\varrho} P\left( \frac{\vr}{\vt^{\frac{3}{2}}}\right), \] 
	and, again using Gibbs relation, we have 
	\begin{align}\label{sm}
		s_M= S\left( \frac{\vr}{\vt^{\frac{3}{2}}}\right),
	\end{align}
	for some function $ S $ with the property 
	\begin{align}
		S^\prime (q)= -\frac{3}{2}  \frac{\frac{5}{3} P(q)- P^\prime(q)q }{q^2} <0 . 
	\end{align}
	Finally, we impose the third law of thermodynamics in the form 
	\begin{align}\label{3rd law}
		\lim\limits_{q\rightarrow \infty } S(q) =0. 
	\end{align} 

the structural assumptions on transport coefficient reads as
\begin{align}\label{k,mu,lamda}
	\begin{split}
	&\kappa(\vr,\vt)= \kappa_1 + \kappa_2 \vt^\beta, \text{with} \kappa_1>0,\; \kappa_2 \geq 0 \text{ and } \beta \leq 2 \\
	&  \mu(\varrho,\vt) = \mu_0+ \mu_1 \vt,\; \mu_0,\mu_1\geq 0,\\
	 &\lambda(\varrho,\vt) = \lambda_0+ \lambda_1 \vt,\; \lambda_0,\lambda_1\geq 0.
\end{split}
\end{align}
	\begin{Th}\label{thu}
		 Let the pressure follows \eqref{Pr1} with $ p_M, e_M  $ and $ s_M $ given by \eqref{pm,em} and \eqref{sm}, respectively, $ \kappa,\mu, \lambda $ satisfy \eqref{k,mu,lamda} and the radiation pressure along with associated internal energy and entropy are given by \begin{align}\label{rad_p1}
			p_R= a \vt^2,\; e_R= a \frac{\vt^2}{\varrho} \text{ and } s_R= 2a\frac{\vt}{\vr} \text{ with } a>0.
		\end{align}
		Assume that $ [\tvr,\tu,\tvt] $ is a classical solution to the system in $ [0,T] \times \Om  $ emanating from initial data $ [\vr_0,\vc{u}_0,\vt_0] $ with $  \vr_0,\vt_0>0 $ in $ \Ov{\Om} $. Assume further that $ \{ \Nu_{t,x}\}_{(t,x)\in Q_T} $ is a measure valued solution of the same problem following the Definition \ref{def-m-comp} such that \[ \Nu_{0,x}= \delta_{[\vr_0,\vc{u}_0,\vt_0]} \text{ for a.e. } x \in \Om .\]
		Then \begin{align}
			\Nu_{t,x}= \delta_{[ \tvr(t,x), \tu(t,x), \tvt(t,x), \bb{D}_x \tu(t,x), \nabla_x \tvt(t,x)]}\text{ for a.e. } (t,x) \in Q_T.
		\end{align}
	\end{Th}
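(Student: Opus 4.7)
The plan is to start from the reduced relative energy inequality \eqref{REineq4} (which holds by Lemma \ref{lem-1} and the essential/residual decomposition) and show that, under the structural hypotheses \eqref{Pr1}--\eqref{rad_p1} and \eqref{k,mu,lamda}, every term on the right-hand side can be absorbed either into $\int_0^\tau \Epsilon_{mv}(t)\,\D t + \int_0^\tau \mathcal{D}_{\tvt}(t)\,\D t$ or into a small fraction of the coercive dissipation on the left. Since initial data match, $\Epsilon_{mv}(0)=0$, so a Gr\"onwall argument then forces $\Epsilon_{mv} \equiv 0 \equiv \mathcal{D}_{\tvt}$; coercivity \eqref{RE_ess_res} together with the compatibilities \eqref{mv-vc}--\eqref{mv-tc} finishes the identification of $\Nu_{t,x}$ as the claimed Dirac mass.

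First I would handle the mixed quadratic terms in \eqref{REineq4} generated by the differences $\mu(\vr,\vt)-\mu(\tvr,\tvt)$, $\lambda(\vr,\vt)-\lambda(\tvr,\tvt)$, $\kappa(\vr,\vt)-\kappa(\tvr,\tvt)$. Using the affine-in-$\vt$ structure \eqref{k,mu,lamda} with $\beta \leq 2$, each such difference is controlled by $|\vt - \tvt|$ (times a power of $\vt$ absorbable by $\tvt$-weights on the left), so Young's inequality splits these into an $\epsilon$-fraction of the full-square dissipations on the left plus $C_\epsilon \int \langle \Nu_{t,x}; |\vt - \tvt|^2\rangle$. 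The essential part of $|\vt-\tvt|^2$ is pointwise controlled by $E(\relenttuple)$ via \eqref{RE_ess_res}; the residual part is sent to the residual-integral bookkeeping below. The $\tvt/\vt$ weight in the viscous and Fourier dissipation on the left, combined with the lower bounds $\mu_0, \kappa_1 > 0$, ensures these quadratic quantities remain coercive on both the essential and residual sets.

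The main obstacle is the residual integral
\[
\int_0^\tau \int_{\Om} \lel \Nu_{t,x}; \bigl[\vt + |p(\vr,\vt)| + |\vu-\tu| + \vr|s(\vr,\vt)|\,|\vu|\bigr]_{\mathrm{res}}\ril \dx \dt,
\]
which must be bounded by $C \Epsilon_{mv}$ plus dissipation. Here the structural hypotheses are essential. From $p_M = \tfrac{2}{3}\vr e_M$ and the asymptotics of $P$ one gets $[p_M]_{\mathrm{res}} \leq C[\vr e_M]_{\mathrm{res}}$ with the $\vr^{5/3}$ growth at $\vr\to\infty$; from \eqref{rad_p1} one obtains $[\vt]_{\mathrm{res}} \leq \epsilon + C_\epsilon[\vt^2]_{\mathrm{res}} \leq C[\,1 + \vr e_R]_{\mathrm{res}}$ and $[p_R]_{\mathrm{res}}\leq C[\vr e_R]_{\mathrm{res}}$. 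For the entropy term I would split $s = s_M + s_R$: the third law \eqref{3rd law} gives $s_M$ bounded when $\vr/\vt^{3/2}$ is large, while in the opposite regime $|s_M|$ grows at most logarithmically and $\vr|s_M|\,|\vu|$ is controlled by $\vr + \vr|\vu|^2 + [1]_{\mathrm{res}}$; the radiative part $\vr s_R |\vu| = 2a\vt|\vu|$ is handled by Cauchy--Schwarz, distributing $\vt$ into $\vr e_R / \sqrt{\vr}$ and $|\vu|$ into $\sqrt{\vr}|\vu|$, so that the product is absorbed by $\vr e + \vr|\vu|^2$. The linear residual $[|\vu-\tu|]_{\mathrm{res}}$ is controlled via Cauchy--Schwarz by $\epsilon \langle \Nu; |\vu-\tu|^2\rangle + C_\epsilon [1]_{\mathrm{res}}$, and the $L^2$ factor is converted into dissipation by the generalized Korn--Poincar\'e inequality \eqref{mv-KP} applied with $\widetilde{\vc{U}}=\tu$, which bounds it by $C_p \int \langle \Nu; |\bb{D}_0(\du)-\bb{D}(\nabla_x \tu)|^2\rangle$, already present on the left of \eqref{REineq4} up to the $\tvt/\vt$-weight that is harmless thanks to $\mu_0>0$. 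All leftover pieces are of the form $[1 + \vr + \vr e + \vr|\vu|^2 + \vr|s|]_{\mathrm{res}}$, bounded by $C\,E(\relenttuple)$ through \eqref{RE_ess_res}.

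It remains to treat the momentum defect contribution $\int_0^\tau \langle r^M; \nabla_x \tu\rangle\,\D t$; by the defect compatibility \eqref{def-m-comp} applied with $\vectorphi = \tu$ this is bounded by $\|\tu\|_{C^1([0,T]\times \Ov\Om)} \int_0^\tau \xi(t)\mathcal{D}_{\tvt}(t)\,\D t$, which is of exactly the Gr\"onwall-compatible form since $\xi \in L^1(0,T)$. Collecting everything yields an inequality
\[
\Epsilon_{mv}(\tau) + \mathcal{D}_{\tvt}(\tau) \leq C\int_0^\tau \bigl(\Epsilon_{mv}(t)+\mathcal{D}_{\tvt}(t)\bigr) \Psi(t)\,\D t
\]
for some $\Psi \in L^1(0,T)$ depending on the strong solution. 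Gr\"onwall's lemma then gives $\Epsilon_{mv} \equiv 0$ and $\mathcal{D}_{\tvt} \equiv 0$. Coercivity \eqref{RE_ess_res} implies $\Nu_{t,x}$ is concentrated on $[\tvr, \tu, \tvt]$ in the state variables; the compatibility identities \eqref{mv-vc} and \eqref{mv-tc} then force the additional gradient variables to coincide with $\bb{D}_x \tu$ and $\nabla_x \tvt$, completing the proof.
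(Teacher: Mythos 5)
Your proof sketch follows the paper's overall route---reduced relative-energy inequality \eqref{REineq4}, affine-in-$\vt$ transport coefficients, coercivity \eqref{RE_ess_res}, defect compatibility \eqref{def-m-comp}, Korn--Poincar\'e \eqref{mv-KP}, Gr\"onwall---but it contains a genuine gap in the handling of the radiative residual $\left[\vr s_R|\vu|\right]_{\text{res}} = \left[2a\vt|\vu|\right]_{\text{res}}$. You propose a Cauchy--Schwarz split, ``distributing $\vt$ into $\vr e_R/\sqrt\vr$ and $|\vu|$ into $\sqrt\vr\,|\vu|$,'' and claim the product is absorbed by $\vr e + \vr|\vu|^2$. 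This does not produce $\vt|\vu|$: the pairing $\sqrt{\vr e_R}\cdot\sqrt\vr\,|\vu| = \sqrt a\,\sqrt\vr\,\vt\,|\vu|$ carries a spurious factor $\sqrt\vr$, and the only Cauchy--Schwarz splittings of $\vt|\vu|$ that hit $\vr|\vu|^2$ on the velocity side leave $\vt^2/\vr = e_R/a$ on the temperature side, which is \emph{not} controlled on the vacuum set $\vr\to 0$ (only $\vr e_R = a\vt^2$ is controlled by the relative energy).

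The term must instead be routed through $\tu$ and the Korn--Poincar\'e inequality, exactly as the paper does in \eqref{E1}: write $\vr s_R|\vu|\le C(\tu)\bigl(\vt|\vu-\tu| + \vr s_R\bigr)$, then Young's inequality $\vt|\vu-\tu|\le \frac{1}{4\epsilon}\vt^2 + \epsilon|\vu-\tu|^2$, observe that $\vt^2 = \vr e_R/a$ and $\vr s_R = 2a\vt\le a(1+\vt^2)$ are dominated by the relative energy, and finally absorb $\epsilon\lel\Nu_{t,x};|\vu-\tu|^2\ril$ into the left-hand dissipation via \eqref{mv-KP} (which is usable because $\mu_0>0$ keeps the viscous dissipation coercive). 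This is not a cosmetic fix: the delicacy of this very term is why the paper's concluding remarks record that the Stefan--Boltzmann radiation pressure $p_R=a\vt^4$ is out of reach by this method. A second, smaller point: you describe the molecular entropy residual via a ``logarithmic growth'' argument, but the paper needs the stronger squared estimate $\vr|s_M|^2\leq C(1+\vr+\vr e_M)$ (from the third law \eqref{3rd law} and the structure of $S$), inserted after $\vr|s_M||\vu|\le \frac12\bigl(\vr|s_M|^2+\vr|\vu|^2\bigr)$; the linear-in-$|s_M|$ version does not close the residual bookkeeping.
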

\begin{proof}
	
At first, using the structural assumption \eqref{k,mu,lamda}, we have
\begin{align*}
	& \inttom \lel \Nu_{t,x}; \mu(\vr,\vt)  \frac{\tvt}{\vt} \left\vert \bb{D}_0 (\du) - \frac{\vt}{\tvt} \bb{D}_0 (\nabla_x \tu )  \right\vert^2 \ril \dx\dt \\
	&+ \inttom \bb{D}_0(\nabla_x \tu) \colon \lel  \Nu_{t,x}; (\mu(\vr,\vt)- \mu(\tvr,\tvt) )\left( \bb{D}_0 (\du) - \frac{\vt}{\tvt} \bb{D}_0 (\nabla_x \tu )  \right)  \ril\dx\dt\\
	&=  \inttom \lel \Nu_{t,x}; \mu_0  \frac{\tvt}{\vt} \left\vert \bb{D}_0 (\du) - \frac{\vt}{\tvt} \bb{D}_0 (\nabla_x \tu )  \right\vert^2 \ril \dx\dt \\
	& + \inttom \lel \Nu_{t,x}; \mu_1  \tvt \left\vert \bb{D}_0 (\du) - \frac{\vt}{\tvt} \bb{D}_0 (\nabla_x \tu )  \right\vert^2 \ril \dx\dt \\
	&+\inttom \bb{D}_0(\nabla_x \tu) \colon \lel  \Nu_{t,x}; \mu_1(\vt -\tvt   )\left( \bb{D}_0 (\du) - \frac{\vt}{\tvt} \bb{D}_0 (\nabla_x \tu )  \right)  \ril\dx\dt\\
	&= \inttom \lel \Nu_{t,x}; \mu_0  \frac{\tvt}{\vt} \left\vert \bb{D}_0 (\du) - \frac{\vt}{\tvt} \bb{D}_0 (\nabla_x \tu )  \right\vert^2 \ril \dx\dt \\
	&+ \inttom \lel \Nu_{t,x}; \mu_1 \tvt \left\vert \bb{D}_0 (\du) - \bb{D}_0 (\nabla_x \tu )  \right\vert^2 \ril \dx\dt \\
	 &-\inttom \bb{D}_0(\nabla_x \tu) \colon \lel  \Nu_{t,x}; \mu_1(\vt -\tvt   )\left( \bb{D}_0 (\du) - \bb{D}_0 (\nabla_x \tu )  \right)  \ril\dx\dt \\
	&+ \inttom \mu_1 \bb{D}_0(\nabla_x \tu) \colon \lel  \Nu_{t,x}; \frac{1}{\tvt}\mu_1(\vt -\tvt   )^2  \bb{D}_0 (\nabla_x \tu )    \ril\dx\dt.
\end{align*}
Using Young's inequality, we obtain
\begin{align*}
	 &\inttom \bb{D}_0(\nabla_x \tu) \colon \lel  \Nu_{t,x}; \mu_1(\vt -\tvt   )\left( \bb{D}_0 (\du) - \bb{D}_0 (\nabla_x \tu )  \right)  \ril\dx\dt \\
	 &\leq \epsilon  \inttom \lel \Nu_{t,x}; \mu_1 \tvt \left\vert \bb{D}_0 (\du) - \bb{D}_0 (\nabla_x \tu )  \right\vert^2 \ril \dx\dt \\
	 &+C(\nabla_x \tu ) \frac{1}{4\epsilon} \inttom \lel  \Nu_{t,x} ;\left[ \levert \vt-\tvt \rivert^2 \right]_{\text{ess}} + \left[ 1+ \varrho e_R(\varrho,\vt ) \right]_{\text{res}} \ril \dx\dt .
\end{align*}
Similarly, we derive the following inequalities:
\begin{align*}
	&\levert \inttom \kappa(\tvt) \frac{\nabla_x \tvt}{\tvt} \cdot \lel \Nu_{t,x}; (\vt-\tvt) \left( \frac{\nabla_x \tvt}{\tvt}- \frac{\vc{D}_{\vt}}{\vt} \right) \ril \dx \dt \rivert \\
	&\leq \epsilon \inttom  \lel \Nu_{t,x} ;   \left\vert \frac{\vc{D}_\vt}{\vt} - \frac{\nabla_x \tvt}{\tvt}\right\vert^2 \ril  \!\!\dx \dt +  C(\epsilon, \tvt ) \inttom \lel \Nu_{t,x}; \levert \vt-\tvt \rivert^2 \ril \dx \dt
\end{align*}
and 
\begin{align*}
	&\levert \inttom {\nabla_x \tvt} \cdot \lel \Nu_{t,x}; (\kappa(\vr,\vt)-\kappa(\tvr,\tvt)) \left( \frac{\nabla_x \tvt}{\tvt}- \frac{\vc{D}_{\vt}}{\vt} \right) \ril \dx \dt\rivert\\
	&=\levert \inttom {\nabla_x \tvt} \cdot \lel \Nu_{t,x}; (\vt^\beta-\tvt^\beta) \left( \frac{\nabla_x \tvt}{\tvt}- \frac{\vc{D}_{\vt}}{\vt} \right) \ril \dx \dt\rivert\\
	&\leq \epsilon \inttom  \lel \Nu_{t,x} ;  (1+\vt^\beta)  \left\vert \frac{\vc{D}_\vt}{\vt} - \frac{\nabla_x \tvt}{\tvt}\right\vert^2 \ril  \!\!\dx \dt \\
	&+  C(\epsilon, \tvt ) \bigg(\inttom \lel \Nu_{t,x}; [ |\vt-\tvt |^2]_{\text{ess}} \ril \dx \dt
	 +\inttom \lel \Nu_{t,x};[1+\vt^\beta]_{\text{res}}  \ril \dx \dt \bigg).
\end{align*}
Since $ \beta \leq 2 $, the second term s controlled by radiation pressure $ p_R $.
Therefore, our choice of $ p $ yields
\begin{align*}
	 &\int_{0}^\tau \int_{ \Om} \lel  \Nu_{t,x};  \left[\vt^2 +\vert p(\vr,\vt) \vert + \vert \vc{u}-\tu \vert +  \vr \vert s(\vr, \vt) \vert\; \vert \vc{u} \vert  \right]_{\text{res}} \ril \dx\dt\\
	 &\leq  C(\epsilon)  \int_{0}^\tau \Epsilon_{mv}(t) \dt  + \epsilon \inttom \lel \Nu_{t,x}; \vert \vc{u}-\tu \vert ^2 \ril \dx \dt \\
	 &+ \int_{0}^\tau \int_{ \Om} \lel  \Nu_{t,x};  \left[   \vr \vert s_M(\vr, \vt) \vert^2\; + \vr s_R \vert \vc{u} \vert  \right]_{\text{res}} \ril \dx\dt.
\end{align*}
Furthermore, for any $ \epsilon>0 $, we get the following inequality
\begin{align}\label{E1}
	\vr s_R \vert \vc{u} \vert  \leq C(\tu) (\vt \vert \vc{u}-\tu \vert+ \varrho s_R)  \leq C(\tu) \left( \frac{1}{4\epsilon} \vt^2 + \epsilon \vert \vc{u}-\tu \vert^2 +  \vr s_R  \right).
\end{align}
On the other hand, the structural assumption on $ p_M $ and $ S_M $ gives us 
\begin{align}\label{E2}
	\vr \vert s_M(\vr, \vt) \vert^2 \leq C(1+ \varrho + \varrho e_M). 
\end{align}
Finally, we conclude that 
\begin{align*}
	&\int_{0}^\tau \int_{ \Om} \lel  \Nu_{t,x};  \left[\vt +\vert p(\vr,\vt) \vert + \vert \vc{u}-\tu \vert +  \vr \vert s(\vr, \vt) \vert\; \vert \vc{u} \vert  \right]_{\text{res}} \ril \dx\dt\\
	&\leq  C(\epsilon)  \int_{0}^\tau \Epsilon_{mv}(t) \dt  + 2 \epsilon \inttom \lel \Nu_{t,x}; \vert \vc{u}-\tu \vert ^2 \ril \dx \dt .
\end{align*}
Now proper choice of $ \epsilon $, generalized Korn-Poincar\' e inequality and Gr\" onwall's argument gives us the desired result. 

\end{proof}
\begin{Rem}
	We prove the weak strong uniqueness result for $ \kappa(\vt)= \kappa_1+\kappa_2\vt^\beta  $, $ 0\leq \beta \leq 2 $. This method does not allow us to prove the weak (measure valued)-strong uniqueness when $ \beta>2 $.
\end{Rem}
\section{Comments on measure valued solution}\label{s5}
We try to obtain a priori bounds for the system in which the pressure law follows \eqref{Pr1} with \eqref{pm,em} and \eqref{rad_p1} and the transport coefficients  $ \mu, \;\lambda $ and $ \kappa $ follow
\begin{align}\label{k,mu.lam2}
	\begin{split}
	0 < \underline{\mu} \left(1 + \vt \right) &\leq \mu(\vt) \leq \overline{\mu} \left( 1 + \vt \right),\ 
	|\mu'(\vt)| \leq c \ \mbox{for all}\ \vt \geq 0,\\
	0 &\leq  \eta(\vt) \leq \overline{\eta} \left( 1 + \vt \right), \\
	0 < \underline{\kappa} \left(1 + \vt^\beta \right) &\leq  \kappa(\vt) \leq \overline{\kappa} \left( 1 + \vt^\beta \right),\ \beta \geq 2.
\end{split}
\end{align} 
We also invoke the third law of thermodynamics \eqref{3rd law}. 
\subsection{A priori estimate}\label{appest}
Let us first recall the ballistic energy inequality
	\begin{align}\label{ap1}
		\begin{split}
		\frac{\D }{\dt} &\intO{ \left[ \frac{1}{2} \vr |\vu |^2 + \vr e - \tvt \vr s \right] } + \intO{ \frac{\tvt}{\vt}	 \left( \mathbb{S} : \bb{D}_x \vu - \frac{\vc{q} \cdot \nabla \vt }{\vt} \right) }\\
		&\leq 	\intO{ \vr \vu \cdot \vc{g}  }-  \intO{ \left[ \vr s \left( \partial_t \tvt + \vu \cdot \nabla \tvt \right) + \frac{\vc{q}}{\vt} \cdot \nabla \tvt \right] },
	\end{split}
\end{align}
where $ \tilde{\vt} $ is a smooth function  such that \[ \tvt >0 \text{ in } (0,T) \times \Om \text{ with } \tvt = \vt_B \text{ on }\partial \Om. \] 
In order to control the last integral in \eqref{ap1}, we proceed analogously as in \cite[Section 4.1] {CF2021}. Hence, we consider the extension $\tvt$ to be the unique solution of the Laplace equation 
\begin{equation} \nonumber
	\Delta \tvt (\tau, \cdot) = 0 \ \mbox{in}\ \Omega,\ \tvt(\tau, \cdot)|_{\partial \Omega} = \vt_B \ \mbox{for any}\ \tau \in [0,T].
\end{equation}
The maximum principle for the Laplace equation gives
\[
\min_{[0,T] \times \partial \Omega} \vt_B \leq \tvt (t,x) \leq \max_{[0,T] \times \partial \Omega} \vt_B \ \mbox{for any}\ (t,x) \in (0,T) \times \Omega.
\]
Let us denote this particular extension by $\widehat{\vt_B}$. 
At first we note that
\[
- \intO{ \frac{\vc{q}}{\vt} \cdot \nabla_x\widehat{\vt_B }} = \intO{ \frac{\kappa(\vt)}{\vt} \nabla_x \vt \cdot \nabla_x \widehat{\vt_B }} = 
\intO{ \nabla_x K(\vt) \cdot \nabla_x \widehat{\vt_B }} = \int_{\partial \Omega} K(\vt_B) \nabla_x \vt_B \cdot \vc{n},  
\] 
where $K'(\vt) = \frac{\kappa (\vt)}{\vt}$. 
Next, as $\widehat{\vt_B}$ is continuously differentiable in time, we obtain
\[
- \intO{ \vr s \partial_t \vt_B } \leq \left[ (1 + \intO{  \left( \frac{1}{2} \vr |\vu |^2 + \vr e - \tvt \vr s \right) } \right].
\]
Therefore, for the term
\begin{equation}  \nonumber
	- \intO{ \vr s \vu \cdot \nabla_x \widehat{\vt B}  } = - \intO{ \vr \mathcal{S} \left( \frac{\vr}{\vt^{\frac{3}{2} } } \right) \vu \cdot \nabla_x \widehat{\vt_B }} - {2a} \intO{ \vt \vu \cdot \nabla_x\widehat{\vt_B }},
\end{equation}
using the inequality in \cite[Section 4, formula (4.6)]{FN2012}, we have
\[ \varrho \left\vert \mathcal{S} \left( \frac{\vr}{\vt^{\frac{3}{2} } } \right) \right \vert  \leq \left( \vr + \vr |\log(\vr)| + \vr [ \log(\vt)]^+ \right) \ \mbox{for any} \ \vr \geq 0,\ \vt \geq 0.\]
Consequently, it yields
\begin{align*}
	\left| \intO{ \vr \mathcal{S} \left( \frac{\vr}{\vt^{\frac{3}{2} } } \right) \vu \cdot \nabla_x \widehat{\vt_B} } \right|  &\leq 
	\left( \intO{ \vr |\vu|^2 } + \intO{\vr \mathcal{S}^2\left( \frac{\vr}{\vt^{\frac{3}{2} } } \right) } \right) \\ &\leq 
	\left( 1 + 	 \intO{ \vr |\vu|^2 } + \intO{ \vr^{\frac{5}{3} } } +\intO{\vr \vt }+ \intO{\vt^2} \right)  \\
	&\leq \left( 1 +  \intO{ \left( \frac{1}{2} \vr |\vu |^2 + \vr e - \tvt \vr s \right) } \right).
\end{align*}

For the other term, we notice that
\begin{align*}
		\left| \intO{ \vt \vu \cdot \nabla \widehat{\vt_B} } \right| &\leq \epsilon \| \vu \|^2_{W^{1,2}(\Omega; R^d)} + c(\epsilon) \int_{ \Om} \vt^2 \dx .
\end{align*}
From the definition of the radiation pressure, we conclude 
\[ \int_{ \Om} \vt^2 \dx \leq C(\epsilon) \int_{ \Om}(\vr e - \tvt \vr s) \dx + c(\vt_B) + \epsilon \| \vu \|^2_{W^{1,2}(\Omega; R^d)} .\] 
Therefore, chossing $ \epsilon $ properly, we are able to get
\begin{align*} 
	&\intO{ \left( \frac{1}{2} \vr |\vu |^2 + \vr e - \tvt \vr s \right) (\tau, \cdot) }    \\
	&+ 
	\inf_{[0,T] \times \partial \Omega}\{ \vt_B \} \int_0^\tau  \intO{ \left( \| \vu \|^2_{W^{1,2}(\Omega; R^d)} + \frac{\kappa (\vt) |\nabla_x \vt|^2 }{\vt^2} \right) }\; \dt \\
	&\leq 
	\intO{ \left( \frac{1}{2} \vr_0 |\vu_0 |^2 + \vr_0 e(\vr_0, \vt_0)  - \tvt \vr_0 s (\vr_0, \vt_0) \right)  } \\
		& + c( \vt_B ) \left[ 1 +
	\int_0^\tau  \intO{ \left( \frac{1}{2} \vr |\vu |^2 + \vr e - \tvt \vr s \right)  }\; \dt \right]. 
\end{align*}
Now we can use Gr\"onwall's argument to deduce a priori estimate for $  \frac{1}{2} \vr |\vu |^2 + \vr e - \tvt \vr s $ . 

Moreover for this particular pressure law \eqref{Pr1} with \eqref{pm,em} and \eqref{rad_p1} along with the third law of thermodynamics \eqref{3rd law}, following Feireisl and B\v rezina\cite{BF2018a},  we are able to conclude that 
\begin{align*}
	&\text{ess} \sup_{(0,T)} \Vert \vr s \Vert_{L^q(\Om)} + \text{ess} \sup_{(0,T)} \Vert \vr s \vc{u}\Vert_{L^q(\Om)} \leq C 
\end{align*}
for some $ q >1$. Although they consider only for the molecular pressure, the radiation pressure case is quite straight forward. Therefore we obtain the following a priori estimate :
 \begin{align*}
	&\text{ess} \sup_{(0,T)} \int_{ \Om} \left[\vr + \vr \vert \vc{u} \vert^2  + \vr e(\vr, \vt) + |\vr \vert s(\vr,\vt)|^q \vert \right] \dx \\
	&+\int_{0}^T \int_{ \Om} \left[ \underline{\mu}\left( 1+\frac{1}{ \vt }\right) \left\vert \nabla_x \vc{u} + \nabla_x^T \vc{u} -\frac{2}{d} \Div \vc{u} \right\vert^2 + \frac{\lambda( \vt)}{2 \vt} \vert \Div \vc{u} \vert ^2\right] \dx \dt\\
	&+ \int_{0}^T \int_{ \Om} \underline{\kappa}\left(\frac{1}{\vt^2}+ \vt^{\beta-2}\right) \vert \nabla_x \vt \vert^2 \dx \dt \leq C( \vt_B),
	\end{align*}
for some $ q>1 $.
\subsection{Comments on compatibility conditions}
 Now suppose we assume a young measure $ \Nu $ is generated by a family of sequences $ (\vr_\epsilon, \vc{u}_\epsilon, \bb{D}_x \vc{u}_\epsilon, \vt_\epsilon , \nabla_x \vt_\epsilon )_{\{\epsilon>0\}}$ that satisfies the bound
 \begin{align}\label{uni-est-ep}
 	\begin{split}
 	&\text{ess} \sup_{(0,T)} \int_{ \Om} \left[\vr_\epsilon + \vr_\epsilon \vert \vc{u}_\epsilon \vert^2  + \vr_\epsilon e(\vr_\epsilon, \vt_\epsilon) + |\vr_\epsilon \vert s(\vr_\epsilon,\vt_\epsilon)|^q \vert \right] \dx \\
 	&+\int_{0}^T \int_{ \Om} \left[ \frac{\mu(\vr_\epsilon, \vt_\epsilon)}{2 \vt_\epsilon } \left\vert \nabla_x \vc{u}_\epsilon + \nabla_x^T \vc{u}_\epsilon -\frac{2}{d} \Div \vc{u}_\epsilon \right\vert^2 + \frac{\mu(\vr_\epsilon, \vt_\epsilon)}{2 \vt_\epsilon } \vert \Div \vc{u}_\epsilon \vert ^2\right] \dx \dt\\
 	&+ \int_{0}^T \int_{ \Om} \kappa(\vr_\epsilon,\vt_\epsilon) \vert \log \vt_\epsilon \vert^2 \dx \dt \leq C,
 \end{split}
 \end{align}
 uniformly with respect to $ \epsilon $ and $ \mu, \lambda \text{ and } \kappa $ follows \eqref{k,mu.lam2} and $ q>1 $. Clearly the bound \eqref{uni-est-ep} is motivated from the discussion we have in Section \ref{appest}. 
 Then following \cite[Section 2.4]{BFN2020}, we are able to deduce the velocity and temperature compatibility. 
 Also, Generalized Korn-Poincar\'e inequality can be derived in the similar way. 
 \subsubsection*{Defect measures and its compatibility}
 The bound \eqref{uni-est-ep}, gives an uniform estimate of $  \left( \frac{1}{2}\varrho_\epsilon \vert \mathbf{u}_\epsilon \vert^2 + \varrho e(\varrho_\epsilon,\vartheta_\epsilon) - \tT \varrho s(\varrho_\epsilon,\vartheta_\epsilon)\right)_{\epsilon>0}  $ with respect to $ \epsilon $ in $ L^\infty(0,T;L^1(\Om)) $.
  We denote the weak* limit of in $ L^\infty_{\text{weak-(*)}}(0,T;\mathcal{M}(\Om)) $ by  $$ \overline{\left( \frac{1}{2}\varrho \vert \mathbf{u} \vert^2 + \varrho e(\varrho,\vartheta) - \tT \varrho s(\varrho,\vartheta)\right)}. $$.Next, we define the corresponding \emph{defect measure} as
  \begin{align*}
  	\mathcal{D}_{\tT} = \overline{\left( \frac{1}{2}\varrho \vert \mathbf{u} \vert^2 + \varrho e(\varrho,\vartheta) - \tT \varrho s(\varrho,\vartheta)\right)}- \left \langle\Nutx ; \left( \frac{1}{2}\varrho \vert \mathbf{u} \vert^2 + \varrho e(\varrho,\vartheta) - \tT \varrho s(\varrho,\vartheta)\right) \right \rangle.
  \end{align*}
Now from the estimate \eqref{uni-est-ep} for entropy, we are able to conclude 
  \[ \left\langle \Nu_{t,x}; \vr s(\vr,\vt) \right\rangle= \overline{ \vr s(\vr,\vt) }   \text{for a.e.} (t,x) \in (0,T)\times \Om \] 
  where $ \overline{ \vr s(\vr,\vt) } $ is a weak limit of $ \vr_\epsilon s_\epsilon $. 
  This implies $ \mathcal{D}_{\tT}  $  is independent of $ \tT $ and eventually we have
  \begin{align*}
 	\mathcal{D} = \overline{\left( \frac{1}{2}\varrho \vert \mathbf{u} \vert^2 + \varrho e(\varrho,\vartheta) \right)}- \left \langle\Nutx ; \left( \frac{1}{2}\varrho \vert \mathbf{u} \vert^2 + \varrho e(\varrho,\vartheta) \right) \right \rangle. 
 \end{align*}
From the comparison lemma for defect measures in Feireisl et. al \cite[Lemma 2.1]{FPAW2016}, we can claim the compatibility of defects \eqref{def-m-comp}, since $ r^M $ is only related to the term $ \vr \vu \otimes \vu + p(\vr,\vt) \bb{I} $. Therefore, this particular pressure law \eqref{Pr1} allows us to have weak convergence for the terms $ \vr s $ and $ \vr s \vu $ in some $ L^q(0,T;L^q(\Om)) $, for some $ q>1 $. Hence, we avoid defect measure for entropy inequality and eventually for ballistic energy inequality. 

\section*{Concluding remarks} 
Here we briefly discuss the possible extensions and limitations of the problem:
\begin{itemize}
	\item We have given no proof of the existence of a measure valued solution. One way to prove existence is to use \emph{consistent approximations}. By consistent approximation, we mean an approximated system of the original system with error terms that vanish in the limiting case. The approximate solutions with some uniform bounds of state variables generate a Young measure, which is eventually a measure valued solution of the system. For a detailed discussion, see \cite[Chapter 5, Section 5.3.1]{FLMS2021book}.
	\item Another way to construct measure valued solution is to consider it as a weak limit of weak solutions. From \cite{CF2021}, we know that there exists a weak solution for
	\begin{align*}
		0 < \underline{\kappa} \left(1 + \vt^\beta \right) &\leq  \kappa(\vt) \leq \overline{\kappa} \left( 1 + \vt^\beta \right),\ \beta > 6,\\
		& p_R = a \vt^4, 
	\end{align*}
 Therefore, at least for system \eqref{NSF-c}-\eqref{temp_bdry} with \eqref{k,mu,lamda} and pressure following \eqref{pm,em}, \eqref{rad_p1}, we expect to obtain a measure valued solution as a suitable limit of this weak solutions. If we consider a modified radiation pressure and the heat conductivity coefficient as
\begin{align*}
	p_{\epsilon,R} = a \vt^2 + \epsilon \vt^4
\end{align*}
and 
\begin{align*}
	\kappa_\epsilon(\vt)=1+ \vt^\beta + \epsilon \vt^\gamma, \text{ with }\beta\leq 2 \text{ and }\gamma> 6.
\end{align*}
Then one can expect to generate a measure value solution by sequence of weak solutions $ \{\vr_\epsilon,\vu_\epsilon, \vt_\epsilon \}_{\{\epsilon>0\}} $. 
	\item Navier-slip boundary condition for velocity: Instead of boundary condition \eqref{velocity_bdry}, we consider the Navier-Slip boundary condition 
	\begin{align*}
	\vu \cdot \mathbf{n}|_{\partial \Om}=[\mathbb{S}(\nabla_{x}\vu)\cdot \mathbf{n}]_{\text{tan}}|_{\partial\Om}=0.
	\end{align*}
   We can provide a similar definition by modifying the Korn-Poincare inequality suitably, see \cite[Section 1.3.6]{C2021}. A similar weak(measure valued)-strong uniqueness result is expected to be true.
	\item Limitation with radiation pressure following Stefan-Boltzman law: Unfortunately, we are not able to prove our results for the radiation pressure following Stefan-Boltzman law $ p_R(\vt) = a \vt^4 $ with $ a>0 $. The main difficulty is to deal the term $\left[ \vr s_R \vert \vc{u} \vert  \right]_{\text{res}} $. Following Feireisl \cite{F2012}, we notice that, the estimation of the term needs certain Sobolev embedding which is missing in our definition. Our choice of $ p_R $ in \eqref{rad_p1} is motivated from the models of Neutron star, see Lattimer et al. \cite{LRPP1994}.
\end{itemize}

\centerline{ \bf Acknowledgement}
\vspace{2mm}

The work of N. Chaudhuri was partly supported by EPSRC Early Career Fellowship no. EP /V000586/1. The author would like to thank Prof. E. Feireisl and Dr. E. Zatorska for their valuable comments. 
\def\cprime{$'$} \def\ocirc#1{\ifmmode\setbox0=\hbox{$#1$}\dimen0=\ht0
	\advance\dimen0 by1pt\rlap{\hbox to\wd0{\hss\raise\dimen0
			\hbox{\hskip.2em$\scriptscriptstyle\circ$}\hss}}#1\else {\accent"17 #1}\fi}
		
		\bibliographystyle{abbrv}
		\bibliography{biblio_nil}
	\end{document}